\documentclass{amsart}
\setlength{\textwidth}{15.5cm}
\setlength{\textheight}{20cm}
\setlength{\oddsidemargin}{0.5cm}
\setlength{\evensidemargin}{0.5cm}
\usepackage{color}
\usepackage{amssymb}
\usepackage{amsmath}
\bibliographystyle{plain}

\def\indiq{{\bf 1}}

\def\P{{\mathbb P}}
\def\R{{\mathbb R}}

\def\Z{{\mathbb Z}}
\def\N{{\mathbb N}}

\def\cE{{\mathcal E}}

\def\F{{\mathcal F}}
\def\cV{{\mathcal V}}
\def\cG{{\mathcal G}}

\def\vip{\vskip0.2cm}

\newtheorem{theo}{Theorem}
\newtheorem{prop}[theo]{Proposition}

\newtheorem{rem}[theo]{Remark}
\newtheorem{lem}[theo]{Lemma}
\newtheorem{defin}[theo]{Definition}

\newtheorem{ex}[theo]{Example}
\newtheorem{ass}[theo]{Assumption}

\begin{document}

\title{Hawkes processes with variable length memory and an infinite number of components}
\date{September 17, 2015}
\author{Pierre Hodara}
\author{Eva L\"ocherbach}

\address{P. Hodara:  CNRS UMR 8088, D\'epartement de Math\'ematiques, Universit\'e de Cergy-Pontoise,
2 avenue Adolphe Chauvin, 95302 Cergy-Pontoise Cedex, France.}

\email{pierre.hodara@sfr.fr}

\address{E. L\"ocherbach: CNRS UMR 8088, D\'epartement de Math\'ematiques, Universit\'e de Cergy-Pontoise,
2 avenue Adolphe Chauvin, 95302 Cergy-Pontoise Cedex, France.}

\email{eva.loecherbach@u-cergy.fr}

\subjclass[2010]{}

\keywords{Point processes. Multivariate nonlinear Hawkes processes. Kalikow-type decomposition. Perfect simulation.}

\begin{abstract}
In this paper, we build a model for biological neural nets where the activity of the network is described by nonlinear Hawkes processes having a variable length memory. The particularity of this paper is to deal with an infinite number of components. We propose a graphical construction of the process and we build, by means of a perfect simulation algorithm, a stationary version of the process. To carry out this algorithm, we make use of a Kalikow-type decomposition technique.

Two models are described in this paper. In the first model, we associate to each edge of the interaction graph a saturation threshold that controls the influence of a neuron on another. 
In the second model, we impose a structure on the interaction graph leading to a cascade of spike trains. Such structures, where neurons are divided into layers can be found in retina.

\end{abstract}
\maketitle 

\section{Introduction and main results}
\subsection{Motivation.} This paper's aim is to give a model in continuous time for an infinite system of interacting neurons. Each neuron is represented by its spike train, i.e.\ the series of events of spiking over time. Since neural activity is continuously recorded in time, a time continuous description is natural. The model considered in this paper is an extension to the continuous time framework of a model which has been recently introduced by Galves and L\"ocherbach \cite{ae} in discrete time. 

We consider a countable set of neurons $I.$ The activity of each neuron $i \in I$ is described by a counting process $ Z^i $ where for any $ - \infty < s < t < \infty ,$ $Z^i (]s, t ]) $ records the number of spikes of neuron $i$ during the interval $ ]s, t ]. $ Under suitable assumptions, the sequence of counting processes $ ( Z^i , i \in I) $ is characterized by its intensity process $ (\lambda_t^i , i \in I) $ which is defined through the relation 
$$ \P ( Z^i \mbox{ has a jump in ]t , t + dt ]} | \F_t ) = \lambda_t^i dt , i \in I.$$
Here, $ \F_t $ is the sigma-field generated by $ Z^i ( ] s, u ] ) , s \le u \le t , i \in I.$ 

Our main motivation is to model neural nets. This naturally leads to the following choice of intensity processes:
\begin{equation}\label{eq:intensity1}
\lambda_t^i = \psi_i \left( \sum_{j \in I} h_{ j \to i } \left( \int_{[L_t^i , t [} g_j ( t-s) d Z^j_s \right) \right) ,
\end{equation}
where $\psi_i : \R \to \R_+ $ is the {\it spiking rate function}, $\{ h_{ j \to i } : \R_+ \to  \R , i , j \in I \} $ a family of {\it synaptic weight functions} modeling the influence of neuron $j $ on neuron $i, $ $ g_j : \R_+ \to \R_+ $ a non-increasing {\it decay function}, and 
\begin{equation}\label{eq:lti}
L_t^i = \sup \{ s < t : Z^i ([s]) > 0 \} 
\end{equation} 
the last spiking time before time $t$ of neuron $i$ (with the convention $[s]:=[s,s]$).

The form (\ref{eq:intensity1}) of our intensity process is close to the typical form of the intensity of a multivariate nonlinear Hawkes process.  The original papers of  Hawkes \cite{Hawkes} and Hawkes and Oakes \cite{ho}, introducing the model,  deal with linear intensity functions.  Extensions to the nonlinear case have been considered by Br\'emaud and Massouli\'e \cite{bm}, see also Massouli\'e \cite{mas}, who propose a study of the stability properties of multivariate nonlinear Hawkes processes. Hawkes processes have shown to be important in various fields of applications. To cite just a few, Hansen, Reynaud-Bouret and Rivoirard \cite{hrbr} is an excellent reference proving the use of Hawkes processes as models of spike trains in neuroscience.  Reynaud-Bouret and Schbath \cite{rbs} deal with an application to genome analysis. In a completely different context, Jaisson and Rosenbaum \cite{jr} obtain limit theorems for nearly unstable Hawkes processes in view of applications in financial price modeling. For a general introduction to Hawkes processes and their basic properties we refer the reader to Daley and Vere-Jones \cite{dvj}.

Our form of the intensity (\ref{eq:intensity1})  differs from the classical Hawkes setting by its {\it variable memory structure} introduced through the term $ L_t^i .$ Hence the spiking intensity of a neuron only depends on its history up to its last spike time which is a biologically very plausible assumption on the memory structure of the process. Therefore, our model can be seen as a nonlinear multivariate Hawkes process where the number of components is infinite with a variable memory structure. The interactions between neurons are encoded through the synaptic weight functions $ h_{ j \to i } $ that we are going to specify below. 

\subsection{The setting.}\label{subsec:setting}
We work on a filtered measurable space $ ( \Omega, {\mathcal A}, \mathbb F)$ which we define as follows.  We write $\mathbb M$ for the canonical path space of simple point processes given by
\begin{multline*}
\mathbb M:= \{ {\tt m} = (t_n)_{n \in \Z}: t_0 \leq 0<t_1, t_n \leq t_{n+1},t_n<t_{n+1} \,\,\, \mbox{if} \,\,\, t_n< + \infty \,\,\, \mbox{or} \,\,\,  t_{n+1}>-\infty ; 
\\ \lim_{n \to +\infty} t_n= + \infty, \lim_{n \to -\infty} t_n= - \infty\} .
\end{multline*}
For any ${\tt m} \in \mathbb M,$ any $n \in \Z,$ let $T_n({\tt m})=t_n .$ We identify $ {\tt m} \in \mathbb M$ with the associated point measure $ \mu  = \sum_n \delta_{T_n ({\tt m })} $ and put ${\mathcal M}_t := \sigma \{ \mu (A) : A \in {\mathcal B} (\R), A \subset ] -\infty,t] \},$ $ {\mathcal M} = {\mathcal M}_\infty.$ We will also systematically identify ${\tt m } $ with the associated counting process $ \alpha ( {\tt m }), $ defined by $\alpha_0 ( {\tt m }) = 0 , $ 
$$ \alpha_t ( {\tt m }) = \mu ( ] 0, t ])  \mbox{ if } t \geq 0,  \alpha_t ( {\tt m }) = - \mu ( ]t, 0 ]) , \mbox{ if } t \le 0. $$
Finally we put $( \Omega, {\mathcal A}, \mathbb F):= ( \mathbb M, {\mathcal M}, ( {\mathcal M}_t )_{t \in \R} )^I .$ We write $ (Z^i , i \in I) $ for the canonical multivariate point measure defined on $ \Omega .$

We specify the following parameters: a family of  firing rate functions $ \psi = \{ \psi_i : \R \to \R_+ , i \in I\},$ a family of synaptic weight functions $h = \{ h_{ j \to i } : \R \to \R , i , j \in I \} ,$ a family of functions  $g= \{ g_j : \R_+ \to \R_+  , j \in I\}, $ which are non-increasing. Recall the definition of the last spiking time of neuron $i$ before time $t,$ given in (\ref{eq:lti}). 

\begin{defin}\label{def:1}
A Hawkes process with variable length memory and an infinite number of interacting components with parameters $ ( \psi, h, g) $  is a probability measure $\P $ on $ ( \Omega, {\mathcal A}, \mathbb F)$ such that 
\begin{enumerate}
\item
$\P-$almost surely, for all $i \neq j,  Z^i $ and $ Z^j  $ never jump simultaneously,
\item
for all $i \in I,$ the compensator of $Z^i $ is given by $\nu^i (dt ) = \lambda^i_t dt ,$ where
$$
 \lambda^i_t  =  \psi_i \left( \sum_{j \in I}  h_{j \to i } \left( \int_{[L_t^i , t [} g_j ( t-s) d Z^j_s \right) \right)  .
$$
\end{enumerate}
\end{defin}

\subsection{Main results.} In the case where $I$ is a finite set, under suitable assumptions on the parameters of the process, the existence and construction of $ (Z^i , i \in I)$ is standard (see \cite{bm} and \cite{dfh}). In our case, however, the number of interacting components defining the process is infinite. In such a framework, Delattre, Fournier and Hoffmann \cite{dfh} prove pathwise existence and uniqueness of the processes, however without giving an explicit construction of the process. In the present paper, we show that -- under suitable assumptions  -- a {\it graphical construction} is possible. This graphical construction does not only imply the existence but also the possibility of a perfect simulation of a stationary version of the process (i.e.\ a probability measure $\P $ on $ (\Omega, {\mathcal A}, \mathbb F),$ such that under $\P, $ for all $ u \in \R  $ and all $i \in I,$ the processes $ Z^i ( ] u, u+ t ] ) , t \in \R $ are stationary). These results are achieved via a {\it Kalikow-type decomposition} in two types of models. The first model is a system containing a {\it saturation threshold} for any directed edge $i \to j $ in the interaction graph defined by the synaptic weights. The second model deals with a {\it cascade of spike trains}.

Kalikow-type decompositions are now largely used in the literature for perfect simulation issues and similar scopes. They have been considered first by Ferrari, Maass, Mart\'{i}nez and Ney \cite{fmmn} and in Comets, Fern\'andez and Ferrari \cite{cff}. This type of technique was then studied in a series of papers for perfect simulation issues. See Galves and L\"ocherbach \cite{ae} for an application in the context of neural biological nets in discrete time. The decomposition that we use in the present paper is a non-trivial extension of the previous considerations to the framework of continuous time neural nets. In the case of our first model, it has to be achieved in a random environment. 

To the best of our knowledge, the perfect simulation algorithm constructed in the present paper is the first result in this direction obtained for Hawkes processes with nonlinear intensity functions. The well known work of M\o ller and Rasmussen \cite{mr} on perfect simulation of Hawkes processes deals with linear intensity functions and exploits very heavily the underling branching structure. The precise form of our perfect simulation algorithm is given in Section 5.2. 

\subsection{Assumptions and notations.}
Throughout this paper we suppose that the firing rate functions $\psi_i : \R \to \R_+$ are non-decreasing and bounded by a real number $\Lambda_i .$ Introducing 
\begin{equation}\label{eq:phi}
\phi_i=\frac{\psi_i}{\Lambda_i},
\end{equation}
we assume 

\begin{ass}\label{ass:1}
The functions $ \phi_i , i \in I, $ are uniformly Lipschitz continuous, i.e.\ there exists a positive constant $ \gamma $ 
such that for all $ x, x' \in \R ,$  $i \in I, $
\begin{equation}\label{eq:Lip}
| \phi_i (x ) - \phi_i  ( x' ) | \le \gamma  |x - x' |   .
\end{equation}

\end{ass}
The interactions between neurons are coded via the synaptic weight functions $ h_{j \to i }.$ For each neuron $i,$ we define
$$ {\mathcal V}_{i \to \cdot} = \{ j \in I, j \neq i  : h_{i \to j } \neq 0 \} ,$$
where $0$ denotes the constant function $0.$ As a consequence, ${\mathcal V}_{i \to \cdot} $ is
the set of all neurons that are directly influenced by neuron $i.$ In the same spirit, we put
\begin{equation}\label{eq:vi}
 {\mathcal V}_{\cdot \to i} = \{ j \in I, j \neq i  : h_{j \to i } \neq 0 \} ,
\end{equation}
which is the set of neurons that have a direct influence on $i.$ These sets may be finite or infinite. 

In the following we introduce the two main types of models that we consider, firstly {\it models with saturation thresholds} and secondly, {\it models with a cascade of spike trains}.

\section{Models with saturation threshold.}\label{sec:2}
\subsection{Models with saturation thresholds.}
We suppose that to each directed edge $ j \to i $ is associated a saturation threshold $K_{ j \to i } > 0 $ representing the maximal number of spikes that the synapse $ j \to i  $ is able to support. We suppose that 
\begin{equation}\label{eq:h}
 h_{ j \to i } (x) = W_{j \to i }  (x \wedge K_{ j \to i } ) ,
 \end{equation}
 where $ W_{ j \to i } \in \R $ is called the {\it synaptic weight} of neuron $j $ on $i.$ Moreover we suppose that $ g_{ j } \equiv 1$ for all $j \in I$ and write for short $ g = {\bf 1}.$  Hence we can rewrite
\begin{equation}\label{eq:intensity2}
 \lambda_t^i = \psi_i \left( \sum_{j \in I} W_{ j \to i} (Z^j ( ]L_t^i , t[ ) \wedge K_{j \to i } )\right) .
\end{equation}
We suppose that
\begin{ass}\label{ass:2}
For all $i \in I,W_{i \to i}=0 $ and  
 \begin{equation}\label{eq:summable}
\sup_{ i \in I} \sum_j |W_{j \to i }|  K_{ j \to i } < \infty .
\end{equation}
\end{ass}

Introduce for any $i \in I $ a non-decreasing sequence $( V_i (k ) )_{ k \geq 0 } $ of finite subsets of $ I $ such that $V_i ( 0 ) = \emptyset ,$ $ V_i ( 1 ) =  \{ i \}  ,$ $ V_i (k ) \subset  V_i ( k+1) ,$ $  V_i (k ) \neq V_i ( k+1)$ if $ V_i (k ) \neq {\mathcal V}_{\cdot \to i}   \cup \{ i \}$ and $\bigcup_k V_i (k) ={\mathcal V}_{\cdot \to i}  \cup \{ i \} $ (recall (\ref{eq:vi})). 
The following theorem states that if any neuron has a sufficiently high spontaneous firing activity, then -- under a stronger summability condition on the interactions than (\ref{eq:summable}) -- a unique stationary version of the Hawkes process with saturation threshold exists.
  
\begin{theo}\label{theo:1hc}
Grant Assumptions \ref{ass:1} and \ref{ass:2} and suppose that for all $i \in I, $ 
\begin{equation}\label{eq:deltahc}
\psi_i \geq \delta_i,
\end{equation} 
for some $ \delta_i > 0.$ Impose moreover the following summability condition. 
\begin{equation}\label{eq:horriblehc}
 \sup_{i \in I} \left( \sum_{k \geq 1} \left[ \left( \sum_{j \in V_i(k)} \frac{\Lambda_j-\delta_j}{\delta_i} \right) \left( \sum_{j \in \partial V_i(k-1)} \left| W_{ j \to i } \right| K_{j \to i} \right) \right] \right) < \frac{1}{\gamma} ,
\end{equation}
where $\gamma $ is given in Assumption \ref{ass:1} and $\partial V_i(k-1):=V_i(k) \setminus  V_i(k-1).$ Then there exists a unique probability measure
$\P $ on $( \Omega, {\mathcal A})$ such that under  $\P ,$ the canonical process $\left( Z^i, i \in I \right) $ on $\Omega$ is a stationary nonlinear Hawkes process with variable length memory and an infinite number of interacting components, with parameters $(\psi,  h,g) $ where $h$ is given by (\ref{eq:h}) and $ g= {\bf 1} .$
\end{theo}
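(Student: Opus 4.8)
The plan is to build the stationary process via a Kalikow-type decomposition of the intensity, realized through a perfect simulation algorithm, and to prove that this algorithm terminates almost surely under the summability condition \eqref{eq:horriblehc}. The starting point is to rewrite the intensity \eqref{eq:intensity2} in the normalized form $\lambda^i_t = \Lambda_i \phi_i(\ldots)$ and then to bound $\phi_i$ from below by $\delta_i/\Lambda_i$ thanks to \eqref{eq:deltahc}; this ``spontaneous activity floor'' is what makes a mixture representation possible. Concretely, I would seek, for each neuron $i$, a probability distribution $(\alpha_i(k))_{k\ge 0}$ on the levels $k$ of the neighborhood filtration $(V_i(k))_k$, together with, for each $k$, a ``local'' transition rule $\phi_i^{[k]}$ depending only on the spikes of neurons in $V_i(k)$ (and, for $k=0$, on nothing at all, giving the spontaneous part), such that
\begin{equation}\label{eq:kalikow-sketch}
\phi_i\Bigl(\sum_{j} W_{j\to i}(Z^j(]L^i_t,t[)\wedge K_{j\to i})\Bigr) = \sum_{k\ge 0}\alpha_i(k)\,\phi_i^{[k]}\bigl(\text{(spikes of }V_i(k)\text{ in }]L^i_t,t[)\bigr).
\end{equation}
The existence of such a decomposition, with $\alpha_i(k)$ controlled by the $k$-th summand in \eqref{eq:horriblehc}, is obtained by a telescoping argument: successive differences $\phi_i^{[k]}-\phi_i^{[k-1]}$ are controlled in sup-norm by $\gamma$ times the total weight $\sum_{j\in\partial V_i(k-1)}|W_{j\to i}|K_{j\to i}$ (using Assumption \ref{ass:1} and the saturation bound), and the prefactor $\sum_{j\in V_i(k)}(\Lambda_j-\delta_j)/\delta_i$ enters because each neuron $j$ in $V_i(k)$ must itself be ``explained'', contributing its own non-spontaneous rate. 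Condition \eqref{eq:horriblehc} says exactly that $\sum_{k\ge 1}\alpha_i(k)<1/\gamma\cdot(\text{normalization})$, hence a genuine probability weight $\alpha_i(0)>0$ is left for the spontaneous level.

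The second step is the graphical construction. I would lay down independent marked Poisson processes: for each $i$, a Poisson process of rate $\Lambda_i$ on $\R$ whose points are the \emph{potential} spike times of neuron $i$, each point carrying an independent uniform mark on $[0,1]$ used for thinning, and an independent choice of level $k$ drawn according to $(\alpha_i(k))_k$. A potential spike of $i$ at time $t$ with level $k$ is accepted if its mark is below $\phi_i^{[k]}$ evaluated on the spike configuration of $V_i(k)$ in the window $]L^i_t,t[$. Because $V_i(k)$ is finite and the relevant window only goes back to the last accepted spike of $i$, the acceptance decision at $(i,t)$ depends on finitely many neurons and, recursively, on their potential spikes in a bounded backward region. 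The perfect simulation (Clan-of-ancestors / backward sketching à la Comets--Fern\'andez--Ferrari) then explores this random dependency graph backward in time; uniqueness and stationarity of $\P$ follow from the standard argument once the backward exploration is shown to stop.

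The main obstacle — and the heart of the proof — is showing that this backward exploration terminates almost surely, i.e.\ that the clan of ancestors of any space-time point is finite a.s. This is where \eqref{eq:horriblehc} is used a second time, now as a subcriticality condition for a branching-type process: starting from $(i,t)$, one spawns ancestors only when a level $k\ge 1$ is selected, which happens with probability $\sum_{k\ge1}\alpha_i(k)$, and then one must explore the potential spikes of all neurons in $V_i(k)$ over a backward window whose expected length is governed by the Poisson rates $\Lambda_j$. The expected number of first-generation ancestors is then bounded by a quantity of the form $\sup_i\sum_{k\ge1}\alpha_i(k)\cdot(\text{expected work in }V_i(k))$, and \eqref{eq:horriblehc} is precisely tuned so that this expected offspring number is strictly less than $1$; a domination by a subcritical Galton--Watson (or, more carefully, a subcritical multitype branching random walk, to handle the unboundedness of windows) then gives a.s.\ finiteness. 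I would carry this out by (i) defining rigorously the ancestor relation and the associated branching process, (ii) computing the one-step mean offspring and matching it to the left-hand side of \eqref{eq:horriblehc}, (iii) invoking extinction of subcritical branching to conclude finiteness, and finally (iv) checking that the resulting construction indeed yields a process whose compensator is \eqref{eq:intensity2} — which amounts to verifying \eqref{eq:kalikow-sketch} integrates correctly against the Poisson intensities — and that translation invariance of the whole Poisson input gives stationarity, while the recursive determinism of the acceptance rule gives uniqueness in law.
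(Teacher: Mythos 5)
Your overall architecture matches the paper's: a Kalikow-type convex decomposition of the acceptance probability according to the space neighbourhoods $V_i(k)$, a backward clan-of-ancestors exploration \`a la Comets--Fern\'andez--Ferrari, and a subcritical branching-process domination driven by \eqref{eq:horriblehc}. But there is a genuine gap where you write that ``the relevant window only goes back to the last accepted spike of $i$'' and treat this as a bounded region readable from the Poisson input. It is not: $L_t^i$ depends on which earlier potential spikes of $i$ were accepted, and those acceptance decisions in turn require knowing still earlier values of $L^i$, so the regress never closes and the ancestor relation is not even well defined as a function of the noise. The device that closes it --- and the true operational role of the floor $\psi_i\ge\delta_i$, which you invoke only loosely as ``making a mixture possible'' --- is the splitting $N^i=\widehat N^i+\widetilde N^i$ of the dominating Poisson process into a spontaneous part $\widehat N^i$ of rate $\delta_i$, whose atoms are accepted unconditionally, and a residual $\widetilde N^i$ of rate $\Lambda_i-\delta_i$ carrying the conditional decisions. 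The last spontaneous spike $\widehat L_t^i\le L_t^i$ is then observable from the Poisson realization alone, and the a-priori window $[\widehat L_t^i,t[\,\supset[L_t^i,t[$ is what makes the dependency tree well defined and locally finite. This is also exactly where the factor $(\Lambda_j-\delta_j)/\delta_i$ in \eqref{eq:horriblehc} comes from: $1/\delta_i$ is the expected window length $\mathbb E(t-\widehat L_t^i)$ and $\Lambda_j-\delta_j$ is the rate of residual potential spikes of $j$ inside it. Without this splitting, your step (i) (``defining rigorously the ancestor relation'') cannot be carried out, and the branching domination has nothing to act on.

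A secondary inaccuracy: you credit \eqref{eq:horriblehc} with ensuring $\alpha_i(0)>0$ so that a genuine mixture exists. That is not what it does. The weights $(\mu_i(k))_{k\ge0}$, built as successive differences of infima of $r_i^{[k]}(1\,|\,\cdot)+r_i^{[k]}(0\,|\,\cdot)$, are nonnegative by monotonicity and sum to $1$ as soon as the tails $\sum_{j\notin V_i(k)}|W_{j\to i}|K_{j\to i}$ vanish, which already follows from the summability \eqref{eq:summable}; \eqref{eq:horriblehc} is used only to make the expected clan-of-ancestors offspring strictly less than one. You also skip the point that the naive telescoping differences are configuration-dependent, so to obtain configuration-free weights $\mu_i(k)$ one must redistribute the local kernels $p_{(i,t)}^{[k]}$ over sub-intervals of $]\alpha_i(k-1),\alpha_i(k)]$ --- a technical but unavoidable step in this construction.
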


\begin{rem}
\vip
(i) Suppose that we have a stronger summability than (\ref{eq:summable}):
$$
\sup_i \left( \sum_{k \geq 1} |V_i(k)| \sum _{ j \in \partial V_i(k-1)} |W_{j \to i} | K_{j \to i} \right) < \infty .
$$
Then (\ref{eq:horriblehc}) holds for $\gamma $ sufficiently small, if we suppose $\inf_{i \in I} \delta_i > 0$ and $\Lambda_i \leq \Lambda$ for all $ i \in I, $ where $ \Lambda > 0 $ is some fixed constant. 
\vip
(ii)
In the above model, there is a saturation threshold for each directed edge between two neurons.  A different model would be a system where the saturation threshold concerns only the global input received by each neuron. This would amount to consider the following intensity
$$
\lambda_t^i= \psi_i \left( K_i^- \vee \left( \left( \sum_{j \in I} W_{j \to i}  Z^j \left( ] L_t^i , t [ \right) \right) \wedge K_i^+ \right) \right) ,
$$
where $K_i^-$ and $K_i^+$ are global saturation thresholds respectively for inhibition and stimulation. Under obvious changes of condition (\ref{eq:horriblehc}), Theorem \ref{theo:1hc} remains also true in this framework.
\vip
(iii)
In \cite{mas}, the author obtains similar results in a slightly different context dealing with truly infinite memory models. The fact that we consider variable length memory Hawkes processes constitutes of course the main difference with respect to \cite{mas}. As a matter of fact, due to this difference, instead of (\ref{eq:deltahc}) and (\ref{eq:horriblehc}), \cite{mas} requires that $\sum_{i \in I} \sum_{j \in I} W_{j \to i} K_{j \to i} < \infty,$ which is a stronger assumption than (\ref{eq:summable}). Finally, let us stress that our approach gives more than only the existence of a stationary solution. It actually gives a graphical construction of the stationary measure, which is not the case in \cite{mas}. 

\end{rem}

\subsection{A Markovian description in terms of a coupled `house of cards'-process.} As in the case of classical Hawkes processes with exponential loss functions, under suitable assumptions, an alternative description of $(Z^i, i \in I)$ via its intensity processes yields a Markovian description of the process. Throughout this subsection, we impose the summability assumption (\ref{eq:summable}). Moreover, we suppose that
\begin{equation}\label{eq:mi}
 \sup_i \Lambda_i | \cV_{ i \to \cdot } | < \infty 
\end{equation}
and that 
\begin{equation}\label{eq:finite}
T_i := \cV_{\cdot \to i } \cup \cV_{i \to \cdot } \mbox{ are finite sets for all } i \in I.
\end{equation}
$T_i$ is the set of neurons that directly influence neuron $i$ or that are directly influenced by it, i.e. the set of neurons $j$ such that $W_{j \to i} \neq 0 $ or $W_{i \to j} \neq 0.$

It is convenient to adopt a description of a process living on the set of directed edges $ \cE = \{ j \to i ,  j \in {\mathcal V}_{ \cdot \to i }, i \in I\} . $ We write $e = j \to i \in \cE $ for a directed edge and introduce for any such $e$ the process $U_t (e)$ defined by
\begin{equation}
{ U}_t (e) =   Z^j \left( ] L_t^i , t [ \right) , t \in \R .
\end{equation}
With this point of view, the neural network is described by the process $( { U_t (e) } , e\in \cE   )_{t \in \R} ,$ taking values in $S := \N^{\cE} .$ Its dynamic is described by its generator defined by
$$
\cG f (\eta) = \sum_{i \in I}  \psi_i\left( \sum_j W_{ j \to i }(\eta ({ j \to i }) \wedge K_{ j \to i }) \right)   \left[ f \left( \eta + \Delta_i \eta \right) - f (\eta) \right] ,
$$ 
where 
$$ (\Delta_i \eta  ) ({ k \to l })  = \left\{  \begin{array}{ll}
- \eta ({ k \to l })  & \mbox{ if } l = i , k \in {\mathcal V}_{ \cdot \to i } \\
1 & \mbox{ if } k = i , l \in {\mathcal V}_{i \to \cdot} \\
0 & \mbox{ else} 
\end{array} \right\} ,
$$
and where $ f \in {\mathcal D} ( \cG ) = \{ f : ||| f  ||| := \sum_{ e \in \cE } \Delta_f ( e) < \infty  \}$ with $ \Delta_f ( e ) = \sup \{ | f (\eta) - f ( \zeta) | : \eta, \zeta \in S, \eta ({e'}) = \zeta ({e'}) \mbox{ for all } e'  \neq e \} .$ 

\begin{rem}
\vip
(i)
Notice that a spike of neuron $i$ does not only affect all neurons $j \in \cV_{ i \to \cdot } $ which receive an additional potential, but also all neurons $ j \in \cV_{ \cdot \to i } ,$ since all $\eta ( j \to i ) $ are reset to $0$ when a spike of neuron $i$ occurs. It is for this reason that we call the above process a coupled `house of cards'-process. 

\vip
(ii) We could also work with  $\tilde S := \{ \eta \in S : \eta (j \to i) \le K_{j \to i} \, \, \forall e=( j \to i) \in \cE\}$ which is the state space of relevant configurations of the process. This would imply to redefine $U_t(j \to i):=  Z^j \left( ] L_t^i , t [ \right) \wedge K_{j \to i}.$
\end{rem}

By Theorem 3.9 of Chapter 1 of Liggett \cite{lig}, (\ref{eq:summable}) together with (\ref{eq:mi}) and (\ref{eq:finite}) implies that $\cG $ is the generator of a Feller process $ (U_t (e) , e \in \cE )_{t \in \R } $ on $S.$ 

\begin{proof}
Under the above conditions, the generator $\cG $ can be rewritten as 
$$ \cG f (\eta ) = \sum_{i } c_{T_i} ( \eta , d \zeta ) [ f ( \eta_i^\zeta) - f ( \eta ) ] , $$
where 
$$ \eta_i^\zeta ( e ) = \left\{ 
\begin{array}{ll}
\eta ( e) & \mbox{ if } e \notin T_i\\
\zeta (e) & \mbox{ if } e \in T_i
\end{array}
\right\} ,$$
and where 
$$ c_{T_i } ( \eta , d \zeta ) = \psi_i ( \sum_j W_{ j \to i } ( \eta ({ j \to i }) \wedge K_{ j \to i } )) \delta_{ \eta + \Delta_i \eta } ( d \zeta ).$$ 
A straightforward calculation shows that the quantity $c_{T_i } = \sup_{\eta } c_{T_i} ( \eta , S) $ defined by Liggett \cite{lig} in formula (3.3) of Chapter 1 can be upper bounded by
$$ c_{T_i } \le \Lambda_i $$ 
and that 
$$ c_{T_i } ( e ) := \sup \{ \| c_{T_i } ( \eta , \cdot ) - c_{T_i} ( \zeta , \cdot ) \|_{TV} : \eta (e') = \zeta (e') \mbox{ for all } e' \neq e \} ,$$
where $\| \cdot \|_{TV} $ denotes the total variation distance, can be controlled by 
$$  c_{T_i } ( e ) \le \left\{
\begin{array}{ll}
\gamma \Lambda_i | W_e| K_e & \mbox { if } e = k \to i ,\\
\Lambda_i & \mbox{ if } e = i \to l , \\
0 & \mbox{ else}
\end{array}
\right\} .$$
The conditions (\ref{eq:summable}), (\ref{eq:mi}) and (\ref{eq:finite}) thus imply that $M$ defined in formula (3.8) of Chapter 1 of Ligget \cite{lig} can be controlled as
$$ M = \sup_e \sum_{ i : e \in T_i } \sum_{ u \neq e } c_{T_i} ( u ) \le 2 \sup_i \Lambda_i \left( \gamma \sum_k |W_{k \to i } | K_{ k \to i } + |\cV|_{ i \to \cdot } \right) < \infty ,$$
and then Theorem 3.9 of Chapter 1 of Liggett \cite{lig} allows to conclude.  
\end{proof} 

As a consequence, we can reformulate Theorem \ref{theo:1hc} in the following way. 

\begin{theo}\label{theo:markov}
Grant the assumptions of Theorem \ref{theo:1hc} and suppose moreover that (\ref{eq:summable}) together with (\ref{eq:mi}) and (\ref{eq:finite}) are satisfied. Then the process $ (U_t^e , e \in \cE )_{t \in \R } $ is ergodic.  
\end{theo}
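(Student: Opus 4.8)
The plan is to establish the two properties contained in the notion of ergodicity (in the sense of Liggett \cite{lig}): that $(U_t(e))_{e \in \cE}$ admits a unique invariant probability measure $\pi$ on $S$, and that the law $\mu_t$ of $U_t$ converges weakly to $\pi$ as $t \to \infty$, for every initial law $\mu_0$. For existence, let $\P$ be the stationary Hawkes process provided by Theorem \ref{theo:1hc}, and let $\pi$ be the law under $\P$ of the $S$-valued random variable $\big( Z^j(] L_0^i, 0 [) \big)_{e = j \to i \in \cE}$, that is, the law of $U_0$. Assumption (\ref{eq:finite}) guarantees that each coordinate $U_t(e)$ is a measurable functional of only finitely many coordinates of the Hawkes path near time $t$, so that $U$ is genuinely $S$-valued and the reset-and-increment mechanism encoded by $\cG$ is well defined; together with the fact that, under $\P$, $(Z^i)$ has intensity $\lambda^i_t$, this shows that $(U_t(e))_e$ under $\P$ is a stationary realisation of the Feller process generated by $\cG$. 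In particular $\pi$ is invariant.

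\emph{Uniqueness.} Let $\tilde\pi$ be any invariant probability measure for $(U_t(e))_e$ and run the corresponding stationary Markov process. Define point processes $\tilde Z^i$ by prescribing that $\tilde Z^i$ jumps at time $t$ precisely when some coordinate $U_t(i \to l)$, $l \in \cV_{i \to \cdot}$, increases by $1$; by (\ref{eq:mi}) and (\ref{eq:finite}) this is unambiguous and, $\tilde\pi$-a.s., distinct $\tilde Z^i$ never jump simultaneously. One then checks that $(\tilde Z^i)_{i \in I}$ is a stationary Hawkes process with parameters $(\psi, h, {\bf 1})$: after each jump of $\tilde Z^i$ all coordinates $U_t(j \to i)$ are reset to $0$ and subsequently count exactly $\tilde Z^j(] L_t^i, t [)$, so the jump rate of $\tilde Z^i$ equals $\psi_i(\sum_j W_{j \to i}(U_t(j \to i) \wedge K_{j \to i})) = \lambda^i_t$. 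By the uniqueness part of Theorem \ref{theo:1hc}, the law of $(\tilde Z^i)_i$ is $\P$, hence $\tilde\pi = \pi$.

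\emph{Convergence to equilibrium.} Here I would exploit the graphical construction and the Kalikow-type decomposition underlying Theorem \ref{theo:1hc}. Fix a finite $F \subset \cE$; since elements of $\cD(\cG)$ are sup-norm limits of cylinder functions, it is enough to prove that the law of $(U_t(e))_{e \in F}$ converges to the corresponding marginal of $\pi$. Couple the process started from an arbitrary configuration at time $0$ with the stationary process built from the graphical construction on all of $\R$; on the event that the backward exploration that determines $(U_t(e))_{e \in F}$ --- which only looks back to the last spikes of the finitely many neurons appearing in $F$ and, recursively through the sets $\cV_{\cdot \to i}$, to their clan of ancestors --- does not reach time $0$, the two processes agree on $F$ at time $t$. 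The summability condition (\ref{eq:horriblehc}) is precisely what makes this exploration almost surely finite, exactly as in the termination analysis of the perfect simulation algorithm of Section 5.2; hence the probability of the complementary event tends to $0$ as $t \to \infty$, and $\mu_t \to \pi$ weakly.

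The main obstacle is this last step: one must set up the backward exploration \emph{for the $U$-process with a prescribed initial condition at time $0$} and prove it terminates, adapting the branching/stopping-time estimates of Theorem \ref{theo:1hc} from their original one-sided, purely probabilistic form to a coupling statement. The first two steps are essentially bookkeeping, the only delicate points being the measurability and finiteness issues controlled by (\ref{eq:finite}) and (\ref{eq:mi}) and the identification of the compensator of the reconstructed $\tilde Z^i$.
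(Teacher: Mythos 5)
The paper's own proof is a single sentence asserting the theorem ``follows immediately from Theorem~\ref{theo:1hc},'' so there is no detailed written argument to compare against; your proposal is a correct unpacking of what that assertion implicitly relies on. You rightly split Liggett's notion of ergodicity into uniqueness of the invariant measure and weak convergence from every initial law, and you rightly locate their sources: uniqueness from the uniqueness part of Theorem~\ref{theo:1hc} via the reconstruction $\tilde\pi \mapsto (\tilde Z^i)_i$ and the identification of its compensator, and convergence not from the bare statement of Theorem~\ref{theo:1hc} but from its proof, namely the a.s.\ termination of the clan-of-ancestors exploration in Section~5.2, which is exactly a coupling-from-the-past mechanism. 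Two small corrections to your sketch: (i) in reconstructing $\tilde Z^i$, neurons with $\cV_{i \to \cdot} = \emptyset$ but $\cV_{\cdot \to i} \neq \emptyset$ have their spikes visible only as simultaneous resets to $0$ of the coordinates $U_t(j \to i)$, $j \in \cV_{\cdot \to i}$, not as increments; (ii) for the convergence estimate you also need stationarity of the dominating PRM to upgrade ``the backward exploration from time $t$ has a.s.\ finite temporal extent'' to ``the probability it reaches time $0$ tends to $0$ as $t \to \infty$,'' which the construction supplies but which deserves an explicit mention.
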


\begin{proof}
The above Theorem \ref{theo:markov} follows immediately from Theorem \ref{theo:1hc}.
\end{proof}

\begin{rem}
Let us compare the above result to the $M < \varepsilon -$criterion of Liggett \cite{lig}, Theorem 4.1 of Chapter 1. The quantity $M $ has already been introduced above. Moreover, $\varepsilon $ is defined by
\begin{multline*}
\varepsilon : = \inf_{e \in \cE } \; \; \inf_{ \eta_1 ,\eta_2 \in S : \eta_1 ( e') = \eta_2 ( e') \forall e' \neq e} \; \sum_{ k : e \in T_k } \Big[ c_{T_k}  \left( \eta_1 , \{ \zeta : \zeta(e) = \eta_2(e) \} \right)  \\  + c_{T_k} \left( \eta_2 , \{ \zeta : \zeta(e) = \eta_1(e) \} \right) \Big].
\end{multline*}
The sufficient condition for ergodicity in Theorem 4.1 of Liggett \cite{lig} is $M < \varepsilon.$
Note that in our particular case we have $\varepsilon=0$ which can easily be seen by considering $\eta_1$ and $\eta_2 $ with $\eta_1(e)=1 $ and $\eta_2(e)=3$ for some $e \in \cE.$ Consequently the sufficient condition  $M < \varepsilon$ is not satisfied. However, Theorem \ref{theo:markov} implies the ergodicity of the process without the condition $M < \varepsilon .$
\end{rem}

\section{A cascade of spike trains.}
We now describe the second model that we consider in this paper, a {\it cascade of spike trains}.
We suppose that for each $j \in I,$ the function  $ g_j : \R_+ \to \R_+ $ is measurable and non-increasing such that $\int_0^{+ \infty } g_j(x)dx < + \infty.$ $g_j,$ models a leak function. We suppose moreover that 
\begin{equation}\label{eq:h2}
 h_{ j \to i } (x) = W_{ j \to i } \cdot x , 
\end{equation} 
for a family of {\it synaptic weights} $\{ W_{ j \to i } \in \R \} $ satisfying the summability condition
\begin{equation}\label{eq:summable2}
\sup_{ i \in I} \sum_j | W_{ j \to i } | < \infty .
\end{equation}
Neurons $j$ such that $ W_{ j \to i } > 0 $ are called {\it excitatory} for $i, $ if $ W_{ j \to i } < 0 ,$ then $j$ is called {\it inhibitory} for $i.$ 
Finally, we impose the following condition on the structure of interactions.

\begin{ass}\label{ass:3}
The set $I$ of the neurons is divided into layers $(I_n)_{n \in \Z}$ such that we have the partition $I= \sqcup_{n \in \Z}I_n.$ For each $n \in \Z$  and for each $i \in I_n,$ we suppose that
\begin{equation}\label{eq:structure}
{\mathcal V}_{\cdot \to i} \subset I_{n-1}.
\end{equation}
\end{ass}
Therefore, a neuron only receives information from neurons in the layer just above itself. This assumption does not apply to the brain's structure which is far too complicated. But such a structure can be found in simpler nervous tissues like the retina. The nonlinear Hawkes process that we consider in this section is defined by its intensity given by
\begin{equation}\label{eq:intensity3}
 \lambda^i_t  =  \psi_i \left( \sum_{j \in I}  W_{j \to i }  \int_{[L_t^i , t [} g_j ( t-s) d Z^j_s  \right)  ,
\end{equation}
together with the assumption (\ref{eq:structure}) on the structure of the interactions.

\begin{theo}\label{theo:1}
Grant Assumptions \ref{ass:1} and \ref{ass:3} and suppose moreover that the condition (\ref{eq:summable2}) is satisfied.
If 
\begin{multline}\label{eq:horrible}
 \sup_{i \in I} \left( \sum_{k \geq 1} \left[ \left( k \left( \sum_{j \in  V_i(k)} \Lambda_j \right)+1 \right) \right. \right. 
\times  \\ \left. \left. \left(   
\sum_{j \in V_i(k-1)} | W_{j \to i} | \Lambda_j \int_{k-1}^{k} g_j(s) ds +  \sum_{j \in \partial V_i(k-1)} | W_{j \to i} | \Lambda_j \int_0^{k} g_j(s) ds  \right)  \right] \right) < \frac{1}{\gamma}, 
\end{multline}
where $\gamma $ is given in Assumption \ref{ass:1}, then there exists a unique probability measure
$\P $ on $( \Omega, {\mathcal A})$ such that under  $\P ,$ the canonical process $\left( Z^i , i \in I \right) $ on $\Omega$ is a stationary nonlinear Hawkes process with variable length memory and an infinite number of interacting components, with parameters $(\psi,  h,g) $ where $h$ is given by (\ref{eq:h2}). 
\end{theo}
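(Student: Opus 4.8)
The strategy is to realise the stationary law by a \emph{graphical construction} driven by a family of independent Poisson clocks, to rewrite the normalised rate functions $\phi_i=\psi_i/\Lambda_i$ through a \emph{Kalikow-type decomposition} into pieces of finite space--time range, and to show that the associated \emph{backward exploration} terminates almost surely --- this last point being precisely what condition (\ref{eq:horrible}) guarantees. Existence, stationarity and the correct compensator then follow from the construction, while uniqueness follows from the finiteness of the set of ``ancestors'' needed to decide any given event.

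First, the Kalikow-type decomposition. For each $i\in I$ one writes
\[
\phi_i \;=\; \sum_{k\ge1}\lambda_i(k)\,\phi_i^{[k]},\qquad \lambda_i(k)\ge0,\quad \sum_{k\ge1}\lambda_i(k)=1,
\]
where $\phi_i^{[k]}$ depends on the point configuration only through the spikes of the neurons of $V_i(k)$ inside a time window of length $k$ before the current time --- in particular through the spikes of $i$ itself, which record whether $L^i_t\ge t-k$. The $\phi_i^{[k]}$ are built by telescoping: $\phi_i^{[1]}$ uses only the range-one data, and for $k\ge2$ the increment $\phi_i^{[k]}-\phi_i^{[k-1]}$ corrects for the influence that becomes visible when the spatial range grows from $V_i(k-1)$ to $V_i(k)$ and the temporal range from $k-1$ to $k$. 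By Assumption \ref{ass:1} together with the monotonicity and integrability of the $g_j$, the sup norm of this increment is bounded by $\gamma$ times the largest variation of the argument of $\phi_i$ that such extra data can produce, namely
\[
\gamma\Big(\sum_{j\in V_i(k-1)}|W_{j\to i}|\,\Lambda_j\int_{k-1}^{k}g_j(s)\,ds \;+\; \sum_{j\in\partial V_i(k-1)}|W_{j\to i}|\,\Lambda_j\int_{0}^{k}g_j(s)\,ds\Big),
\]
which one takes as the weight $\lambda_i(k)$, the remaining mass going to $\lambda_i(1)$. By (\ref{eq:summable2}) and $\int_0^\infty g_j<\infty$ these weights sum to a finite number uniformly in $i$; condition (\ref{eq:horrible}) is strictly stronger and is what is needed next.

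Next, the construction. Attach to each $i$ an independent Poisson process $\Pi^i$ on $\R$ of rate $\Lambda_i$ and, to each atom $s$, independent marks $(K^i_s,U^i_s)$ with $K^i_s\sim(\lambda_i(k))_{k\ge1}$ and $U^i_s$ uniform on $[0,1]$; declare $s$ a spike of $i$ iff $U^i_s\le\phi_i^{[K^i_s]}$ evaluated on the spikes of $V_i(K^i_s)$ in $[s-K^i_s,s[$. To make sense of this implicit definition, run the backward exploration: deciding the status of $(i,s)$ requires knowing the spikes of all $j\in V_i(k)$, $k=K^i_s$, on $[s-k,s[$, hence inspecting every atom of each $\Pi^j$ in that window and recursing. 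By Assumption \ref{ass:3} every such $j$ lies in the layer immediately above that of $i$, so the exploration moves strictly upward through the layers and never revisits a node, and the explored set carries a genuine branching structure. The expected number of children of $(i,s)$, averaged over $K^i_s$, is at most $\sum_{k\ge1}\lambda_i(k)\big(k\sum_{j\in V_i(k)}\Lambda_j+1\big)$, the factor $k\sum_{j\in V_i(k)}\Lambda_j+1$ bounding the expected size of the space--time region to be inspected (the $+1$ for the determination of $L^i_s$); using the bound on $\lambda_i(k)$ above, this is at most the left-hand side of (\ref{eq:horrible}) times $\gamma$, hence $<1$ uniformly in $i$. Since moreover a node has no children whenever the relevant Poisson windows are empty (an event of positive probability), the exploration is dominated by a subcritical Galton--Watson process and terminates after finitely many steps a.s. Consequently each $Z^i(]a,b])$ is a measurable function of the Poisson field restricted to an a.s.\ bounded region; this defines the law $\P$ on $(\Omega,\mathcal A)$, for which independence and atomlessness of the clocks preclude simultaneous jumps, translation invariance of the field gives stationarity, and $\sum_k\lambda_i(k)\phi_i^{[k]}=\phi_i$ gives, after conditioning on $\F_t$ and integrating out the marks, the compensator $\psi_i(\cdot)\,dt$ with the argument of (\ref{eq:intensity3}). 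Uniqueness follows because any stationary solution admits a thinning representation on the same Poisson field and must then agree with the constructed process on the a.s.\ finite clan of ancestors of any query, a discrepancy being forced to propagate up through that finite clan.

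I expect the main obstacle to be the quantitative estimate of the previous paragraph: identifying, for each $\phi_i^{[k]}$, exactly which space--time data it must see and checking that the expected offspring number is bounded by precisely the combination of terms in (\ref{eq:horrible}) --- matching the prefactor $k\sum_{j\in V_i(k)}\Lambda_j+1$ with the cluster size and the inner bracket with the Lipschitz increment. A secondary technical point is the careful definition of $\phi_i^{[k]}$ on the event that $i$ has not spiked during the last $k$ units of time (so that $L^i_t$ is not determined by the range-$k$ data), which must be done so that the convex decomposition holds as a pointwise identity.
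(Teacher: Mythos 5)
Your overall strategy coincides with the paper's: a Kalikow-type decomposition of $\phi_i$, a backward exploration of ``clans of ancestors'' on a Poisson scaffold, subcriticality of the exploration under condition (\ref{eq:horrible}), and translation invariance for stationarity and uniqueness. But there is a genuine gap in the core estimate, precisely where you flagged an ``obstacle.''

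You take the weights $\lambda_i(k)$ to be the \emph{deterministic} quantity
$\gamma\bigl(\sum_{j\in V_i(k-1)}|W_{j\to i}|\Lambda_j\int_{k-1}^{k}g_j+\sum_{j\in\partial V_i(k-1)}|W_{j\to i}|\Lambda_j\int_{0}^{k}g_j\bigr)$
and assert that the sup norm of the increment $\phi_i^{[k]}-\phi_i^{[k-1]}$ is bounded by it. That is not true: once you condition on the Poisson field $N$, the increment controls a difference of sums $\sum_j W_{j\to i}\int g_j(t-s)\,dz_j(s)$ over configurations $z\ll N$, and its size is driven by the actual number of atoms of $N^j$ in the relevant space--time window, which is an unbounded Poisson random variable. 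There is no uniform-in-$N$ bound; the quantity you wrote is the \emph{expectation} of the correct bound, not the bound itself. This is exactly why the paper's decomposition is carried out in random environment: the weights $\mu_{(i,t)}(k)$ are $\sigma(N)$-measurable, and Proposition 13 of the paper gives the pointwise (realization-dependent) estimate
$\mu_{(i,t)}(k)\le\gamma\bigl(\sum_{j\in V_i(k-1)}|W_{j\to i}|\int_{t-k}^{t-k+1}g_j(t-s)\,dN_s^j+\sum_{j\in\partial V_i(k-1)}|W_{j\to i}|\int_{t-k}^t g_j(t-s)\,dN_s^j\bigr)$,
with the $\Lambda_j\,ds$ of your bound replaced by $dN_s^j$. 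With deterministic weights the convex decomposition $\phi_i=\sum_k\lambda_i(k)\phi_i^{[k]}$ with $\phi_i^{[k]}\in[0,1]$ cannot hold pointwise in $x$.

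This has a second-order consequence for the subcriticality estimate. You attribute the $+1$ in the factor $k\sum_{j\in V_i(k)}\Lambda_j+1$ to ``the determination of $L^i_s$.'' In the paper this $+1$ arises from the covariance between the random weight $\mu_{(i,t)}(k)$ and the clan-size $|C^{(i,t)}(k)|$, both of which are functionals of the \emph{same} Poisson field; the key identity is
$\mathbb E\bigl[(\int_0^k dN_s^j)(\int_0^k g_j(s)\,dN_s^j)\bigr]=\Lambda_j(k\Lambda_j+1)\int_0^k g_j(s)\,ds$.
With deterministic weights, weight and clan size are independent, and that source of $+1$ disappears, so your expected-offspring bound would not match (\ref{eq:horrible}). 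Relatedly, the comparison with a branching process needs an explicit independence argument: the indicator $1_{\{(j,s)\in C_{n-1}^{(i,t)}\}}$ must be independent of the mean offspring $M^{(j,s)}$, and the paper gets this from Assumption \ref{ass:3} by noting that the former depends on layers strictly between those of $i$ and $j$ while the latter depends only on the layer above $j$. Your sketch invokes the acyclic upward flow but does not isolate this independence, which is exactly what makes the recursion (\ref{eq:214}) usable. Fixing these points -- random weights, the covariance computation, and the layer-based independence -- recovers the paper's argument.
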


\begin{rem}
Let us once more compare this result with Theorem 3 of \cite{mas}. Instead of our conditions (\ref{eq:horrible}) and Assumption \ref{ass:3} on the structure of interactions, \cite{mas} requires that 
$$
\sum_{i \in I} \sum_{j \in I} W_{j \to i} \int_0^{+\infty} t g_j(t)dt < \infty.
$$ 
A direct comparison with (\ref{eq:horrible}) is difficult, but (\ref{eq:horrible}) seems to be slightly less restrictive.
\end{rem}

We give two examples in which condition (\ref{eq:horrible}) is satisfied in order to illustrate our result.
For the sake of simplicity, we will assume that for all $j, \Lambda_j = \Lambda$ and $g_j=g.$ We will also choose a simple structure for the network; each layer of neurons is identical and indexed by $\Z$ so that we can take $I= \Z^2$ with indexes $i=(i_1,i_2) \in \Z^2$ where $i_1 $ denotes the `site' of the neuron and $i_2$ the `height' of its layer. 
\begin{ex}
We assume that each neuron is only influenced by two neurons in the upper layer
$$
W_{(j_1,j_2) \to (i_1,i_2)}=W \, \indiq_{j_2=i_2-1} \left( \indiq_{j_1=i_1-1} + \indiq_{j_1=i_1+1} \right) ,
$$
where $W \in \R $ is a constant.  
Then condition (\ref{eq:horrible}) can be rewritten as 
$$
2 \Lambda ( 6 \Lambda +1) \int_0^2 g(s)ds
+ \sum_{k \geq 3} \left[ 2 \Lambda (3 k \Lambda +1) \int_{k-1}^k g(s)ds \right] < \frac{1}{W \gamma} .
$$
The above sum is finite if $\int_{k-1}^k g(s)ds=  {\mathcal O} \left( \frac{1}{k^\alpha} \right) $ with $\alpha > 2.$ In the case of an exponential loss function $g(s) = e^{-as}$ for some $a>0,$ (\ref{eq:horrible}) is satisfied if  $\Lambda , a, W, \gamma$ are such that 
$$
(6 \Lambda + 1) \left( 1- e^{-2a} \right) + e^{-2a} \left( 9 \Lambda + 1 + \frac{3 \Lambda e^{-a}}{1-e^{-a}} \right) < \frac{a}{2\Lambda W \gamma}.
$$
In particular, for fixed $a, \Lambda $ and $\gamma $ there exists $W^* $ such that $W < W^*$ implies (\ref{eq:horrible}).
\end{ex}

\begin{ex}
Suppose that $g(s)= e^{-as}$ for some $a>0.$ We assume that each neuron is influenced by an infinite number of neurons and that
$$
W_{(j_1,j_2) \to (i_1,i_2)}=\frac{W}{ |j_1-i_1|^\beta}  \indiq_{j_2=i_2-1}  \indiq_{j_1 \neq i_1}
$$
for some $ \beta > 0$ and $W > 0.$ Putting $V_{(i_1,i_2)}(k):= \{(j_1,i_2-1); 1 \leq |j_1-i_1| \leq k \},$ we have for all neurons $j \in \partial V_i(k-1), W_{j \to i}= \frac{1}{k^\beta}.$
Condition (\ref{eq:horrible}) can be rewritten as
$$
\sum_{k \geq 2} \left[ \Big( k \Lambda (2k+1) +1 \Big) \left(   
\sum_{l = 1}^{k-1} \frac{\Lambda }{l^\beta} e^{-ak} (e^a-1) + \frac{\Lambda }{k^\beta} (1-e^{-ak} )  \right) 
 \right] < \frac{a}{2 W \gamma} .
$$
This sum is finite if and only if $\beta > 3.$ In particular, for fixed $a, \Lambda , \gamma $ and $\beta > 3 $ there exists $W^* $ such that $W < W^*$ implies (\ref{eq:horrible}).
\end{ex}

\begin{rem}
In the model of Section \ref{sec:2} the presence of spontaneous spikes guarantees that the neural network is always active. In the frame of cascades of spike trains, we do not impose the presence of spontaneous spikes. Thus we have to study the non-extinction of the process. Indeed, if for all $i \in I, \psi_i(0)=0,$ then $Z^i \equiv {\bf 0}$ for all $i \in I$  is a possible stationary version of the Hawkes process with intensity (\ref{eq:intensity3}), and Theorem \ref{theo:1} implies that this is indeed the unique stationary solution.

On the other hand, if there exists $i \in I$ such that $\psi_i(0) > 0,$ then $Z^i \equiv {\bf 0}$ for all $i \in I$ cannot be a stationary solution, i.e.\ the system cannot go extinct.

\end{rem}

\section{A dominating Poisson Random Measure.}
Recall that the firing rate functions $\psi_i $ considered in this paper are bounded by a constant $\Lambda_i$ for each $i \in I.$ We will use this assumption to introduce a Poisson Random Measure (PRM) $N (dt,di,dz) $ on $\R \times I \times [0,1] $ with intensity $ dt  \left( \sum_{i \in I} \Lambda_i \delta_i \right) dz  $ on $\R \times I \times [0,1] $  dominating the process $\left( Z^i, i \in I \right).$ This allows us to select the jump times of $Z^i$ among those of $N^i$ according to probabilities driven by the function $\phi_i$ (recall (\ref{eq:phi})). This leads to the following definition:

\begin{defin}\label{def:2}
A family $ (Z^i , i \in I)$ of random point measures defined on $ ( \Omega, {\mathcal A}, \mathbb F)$ is said to be a Hawkes process with variable length memory and an infinite number of interacting components with parameters $(\psi, h, g ) $ if almost surely, for all $i \in I $ and $C \in {\mathcal B} ( \R ) , $ 
\begin{equation}\label{eq:sde}
Z^i(C)= \int_C \int_{ \{i\} } \int_{[0,1]} 1_{z \leq \frac{1}{\Lambda_i} \psi_i \left( \sum_{ j } h_{j \to i } \left( \int_{[L_t^i , t [} g_j(t-s) d Z^j_s \right) \right) }
{N}(dt,di,dz).
\end{equation}
\end{defin}

According to Br\'emaud and Massouli\'e \cite{bm}, see also Proposition 3 of Delattre, Fournier and Hoffmann \cite{dfh}, a Hawkes process according to Definition \ref{def:2} is a Hawkes process according to Definition \ref{def:1} and vice versa. 

Formula (\ref{eq:sde}) implies that we can construct the process $( Z^i , i \in I) $ by a thinning procedure applied to the a priori family of dominating Poisson random measures $ N^i (dt ) = N ( dt, \{ i \}, [0, 1])$ having intensity $\Lambda_i dt $ each. Since $Z^i$ is a simple point measure, it is enough to define it through the times of its atoms. Each atom of $Z^i $ must also be an atom of $N^i $ since $Z^i \ll N^i.$ In other words, the associated counting process $\alpha ( Z^i)$ can only jump when the counting process $\alpha ({N}^i) $ jumps.
We write $T_n^i, n \in \Z,$ for the jump times of $N^i .$ Fix a given jump $ t = T_n^i $ of $ N^i .$ Then, conditionally on $N,$ the probability that this time is also a jump time of $Z^i$ is given by 
\begin{equation}\label{eq:transition2hc}
 \P( Z^i (\{ t\} ) = 1 | { \mathcal F}_{t}  ) = \phi_i  \left( \sum_j  h_{ j \to i} \left( \int_{[ L_t^i , t[} g_j ( t- s) d Z^j_s  \right)  \right) =: p_{(i,t) } ( 1 | \F _t) .
\end{equation}
In other words, given that $t$ is a jump time of $N^i , $ $p_{(i,t) } ( 1 | \F _t) $ is the probability that this jump is also a jump of $Z^i.$ This probability depends on the past before time $t$ of the process. In what follows we propose a decomposition of $p_{(i,t) } ( 1 | \F _t)$ according to growing time-space neighbourhoods that explore the {\it relevant past} needed in order to determine $p_{(i,t) } ( 1 | \F _t).$ This decomposition is a Kalikow-type decomposition as considered first by Ferrari, Maass, Mart\'{i}nez and Ney \cite{fmmn} and in Comets, Fern\'andez and Ferrari \cite{cff}. The decomposition that we consider here is a nontrivial extension of the previous results to the framework of continuous time neural nets. In the case of Theorem \ref{theo:1}, it has to be achieved in a random environment, where the environment is given by the a priori realization of the PRM $N.$ We start with the proof of Theorem \ref{theo:1hc} which is conceptually simpler.

\section{Proof of Theorem  \ref{theo:1hc} .}
\subsection{Kalikow-type decomposition.} 
The condition (\ref{eq:horriblehc}) of Theorem \ref{theo:1hc} allows us to decompose the law of the conditional probability (\ref{eq:transition2hc}) according to space neighbourhoods $V_i(k).$ 
This decomposition will be independent of the realization of the a priori PRM $N.$ This will be crucial in the perfect simulation procedure described in the next subsection. We will work with $\tilde S ,$  the state space of relevant configurations of the process defined in Remark 6. For the convenience of the reader, we recall its definition here:
$$
\tilde S := \{ \eta \in S : \eta (j \to i) \le K_{j \to i} \, \, \forall e=( j \to i) \in \cE\}
$$
and introduce the following notations. First,
\begin{equation}
r_i^{[0]} ( 1) = \inf_{ \eta  \in \tilde S} \phi_i \left( \sum_j W_{ j \to i } \eta({j \to i} )  \right),
\end{equation}
which is the minimal probability that neuron $i$ spikes uniformly with respect to all possible configurations. Then we define
\begin{equation}
r_i^{[0]} ( 0) = \inf_{ \eta\in \tilde S} \left( 1-\phi_i \left( \sum_j W_{ j \to i } \eta({j \to i} ) \right)  \right),
\end{equation}
which is the minimal probability that neuron $i$ does not spike. Next, for each $k \geq 1$ and each $\zeta \in \tilde S,$ we define the set $D_i^k(\zeta)$ by
$$
D_i^k(\zeta) := \{ \eta \in \tilde S: \forall j \in V_i(k), \eta({j \to i} )= \zeta({j \to i} ) \} 
$$ and put
\begin{equation}
r_i^{[k]} (1 | \zeta ) = \inf_{ \eta \in D_i^k(\zeta)   } \phi_i \left( \sum_j W_{ j \to i } \eta ({j \to i} ) \right), \quad \,
r_i^{[k]} (0 | \zeta ) = \inf_{ \eta \in D_i^k(\zeta)   } \left( 1- \phi_i \left( \sum_j W_{ j \to i } \eta ({j \to i}) \right) \right).
\end{equation}
Finally, we define 
$$ \alpha_i (0)  = r_i^{[0]} ( 1)  + r_i^{[0]} ( 0 ) , \quad \alpha_i (k)  = \inf_{\zeta \in \tilde S } \left( r_i^{[k]} (1 | \zeta )  +  r_i^{[k]} (0 | \zeta ) \right) $$
for all $k \geq 1$ and let
$$\mu_i (0) = \alpha_i (0) \mbox{ and } \mu_i(k)  =   \alpha_i (k )- \alpha_i (k-1)  $$
for all $k \geq 1.$ 

\begin{lem}\label{lem:muprobahc}
$\left( \mu_i(k)\right)_{k \geq 0}$ defines a probability on $\N. $ 
\end{lem}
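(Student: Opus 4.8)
To show that $(\mu_i(k))_{k\geq 0}$ is a probability on $\N$, I need two things: (a) nonnegativity of every $\mu_i(k)$, and (b) that the sum $\sum_{k\geq 0}\mu_i(k)$ equals $1$. Since by definition $\mu_i(0)=\alpha_i(0)$ and $\mu_i(k)=\alpha_i(k)-\alpha_i(k-1)$ for $k\geq 1$, the partial sums telescope: $\sum_{k=0}^{K}\mu_i(k)=\alpha_i(K)$. Hence both (a) and (b) reduce to a single monotonicity-plus-limit statement about the sequence $(\alpha_i(k))_{k\geq 0}$: it is nondecreasing in $k$ (giving $\mu_i(k)\geq 0$), and $\alpha_i(k)\to 1$ as $k\to\infty$ (giving total mass $1$). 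I also need $\alpha_i(0)\geq 0$, which is immediate since $r_i^{[0]}(1)$ and $r_i^{[0]}(0)$ are each infima of probabilities (hence in $[0,1]$).

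First I would establish monotonicity of $\alpha_i(k)$. The key observation is that the sets $D_i^k(\zeta)$ shrink as $k$ grows: because $V_i(k)\subset V_i(k+1)$, fixing $\eta$ on the larger neighbourhood $V_i(k+1)$ is a stronger constraint than fixing it on $V_i(k)$, so $D_i^{k+1}(\zeta)\subset D_i^{k}(\zeta)$. Taking an infimum over a smaller set can only increase it, so $r_i^{[k+1]}(1\mid\zeta)\geq r_i^{[k]}(1\mid\zeta)$ and likewise for the $0$-component, whenever both are evaluated along the same configuration $\zeta$. Since $\alpha_i(k+1)=\inf_\zeta\big(r_i^{[k+1]}(1\mid\zeta)+r_i^{[k+1]}(0\mid\zeta)\big)$ and each summand dominates the corresponding $k$-level quantity pointwise in $\zeta$, one gets $\alpha_i(k+1)\geq \inf_\zeta\big(r_i^{[k]}(1\mid\zeta)+r_i^{[k]}(0\mid\zeta)\big)=\alpha_i(k)$ for $k\geq 1$; the step from $k=0$ to $k=1$ works identically since $r_i^{[0]}(1)=\inf_{\eta\in\tilde S}\phi_i(\cdots)$ and $D_i^1(\zeta)\subset\tilde S$. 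This gives $\mu_i(k)\geq 0$ for all $k$.

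The main obstacle — and the only genuinely substantive point — is showing $\alpha_i(k)\to 1$. Here I would use that $\bigcup_k V_i(k)={\mathcal V}_{\cdot\to i}\cup\{i\}$ together with the summability condition (\ref{eq:summable}): for $\eta\in D_i^k(\zeta)$, the value $\sum_j W_{j\to i}\eta(j\to i)$ differs from $\sum_j W_{j\to i}\zeta(j\to i)$ only through coordinates $j\notin V_i(k)$, and for such $j$ the contribution is bounded by $\sum_{j\notin V_i(k)}|W_{j\to i}|K_{j\to i}$, which tends to $0$ as $k\to\infty$ by (\ref{eq:summable}) and the fact that $V_i(k)$ exhausts ${\mathcal V}_{\cdot\to i}$. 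Using the Lipschitz bound (\ref{eq:Lip}) on $\phi_i$, this forces $\phi_i\big(\sum_j W_{j\to i}\eta(j\to i)\big)\geq \phi_i\big(\sum_j W_{j\to i}\zeta(j\to i)\big)-\gamma\,\varepsilon_k$ uniformly over $\eta\in D_i^k(\zeta)$, where $\varepsilon_k:=\sup_i\sum_{j\notin V_i(k)}|W_{j\to i}|K_{j\to i}\to 0$. Taking the infimum over $D_i^k(\zeta)$ gives $r_i^{[k]}(1\mid\zeta)\geq \phi_i(\cdots)-\gamma\varepsilon_k$ and symmetrically $r_i^{[k]}(0\mid\zeta)\geq 1-\phi_i(\cdots)-\gamma\varepsilon_k$, so $r_i^{[k]}(1\mid\zeta)+r_i^{[k]}(0\mid\zeta)\geq 1-2\gamma\varepsilon_k$ for every $\zeta$, whence $\alpha_i(k)\geq 1-2\gamma\varepsilon_k$. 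Combined with the trivial upper bound $\alpha_i(k)\leq 1$ (the two infima are bounded by $\phi_i$ and $1-\phi_i$ at a common configuration, which sum to $1$), this yields $\alpha_i(k)\to 1$. Therefore $\sum_{k\geq 0}\mu_i(k)=\lim_{K\to\infty}\alpha_i(K)=1$, completing the proof.
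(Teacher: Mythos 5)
Your proof is correct and follows essentially the same route as the paper's: nonnegativity from the nesting $D_i^{k}(\zeta)\subset D_i^{k-1}(\zeta)$, telescoping to reduce the total mass to $\lim_k\alpha_i(k)$, and a Lipschitz bound on $\phi_i$ controlled by the tail $\sum_{j\notin V_i(k)}|W_{j\to i}|K_{j\to i}$. The only (harmless) differences are cosmetic: you bound $r_i^{[k]}(1\mid\zeta)$ and $r_i^{[k]}(0\mid\zeta)$ from below separately against $\phi_i$ evaluated at $\zeta$, picking up a factor $2\gamma\varepsilon_k$ where the paper writes $r_i^{[k]}(1\mid\zeta)+r_i^{[k]}(0\mid\zeta)=1-[\sup-\inf]$ and bounds the oscillation by $\gamma\varepsilon_k$ directly, and you correctly invoke the summability condition (\ref{eq:summable}) for $\varepsilon_k\to 0$ whereas the paper cites (\ref{eq:horriblehc}); your citation is the cleaner one.
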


\begin{proof}
Indeed, since $D_i^k(\zeta) \subset D_i^{k-1}(\zeta) ,$ we have $\mu_i(k) \geq 0$ for all $k \geq 0.$ Therefore, all we have to show is that
\begin{equation}\label{eq:proba}
\sum_{k \geq 0} \mu_i(k) =1.
\end{equation}
Note that 
$\sum_{k \geq 0} \mu_i(k) = \lim_{k \to + \infty} \left[ \inf_{\zeta \in \tilde S} \left( r_i^{[k]} (1 | \zeta)  +  r_i^{[k]} (0 | \zeta ) \right) \right].$
Hence it is sufficient to show that for all $\zeta \in \tilde S,$ 
$$\lim_{k \to + \infty}  \left( r_i^{[k]} (1 | \zeta )  +  r_i^{[k]} (0 | \zeta) \right)=1 .$$
For all $k \geq 0,$ we have
$$
r_i^{[k]} (1 | \zeta )  +  r_i^{[k]} (0 | \zeta ) 
= 1 - \left[ \sup_{ \eta \in D_i^k(\zeta)   } \phi_i \left( \sum_j W_{ j \to i } \eta ({j \to i} )\right)  -\inf_{ \eta \in D_i^k(\zeta)   } \phi_i \left( \sum_j W_{ j \to i } \eta ({j \to i} )\right) \right].
$$
Using that $\phi_i$ is Lipschitz with Lipschitz constant $\gamma $ and increasing, we deduce that
\begin{multline*}
 \sup_{ \eta\in D_i^k(\zeta)   } \phi_i \left( \sum_j W_{ j \to i } \eta ({j \to i})  \right) -\inf_{ \eta \in D_i^k(\zeta)   } \phi_i \left( \sum_j W_{ j \to i }  \eta ({j \to i} ) \right) 
 \\ 
 \leq \gamma \left[ \sup_{ \eta \in D_i^k(\zeta )   } \left( \sum_j W_{ j \to i } \eta ({j \to i} ) \right) -\inf_{ \eta \in D_i^k(\zeta)   } \left( \sum_j W_{ j \to i } \eta( {j \to i} ) \right) \right]
 \\
 \leq \gamma \sum_{j \notin V_i(k)} \left| W_{ j \to i } \right| K_{j \to i}.
\end{multline*}
Now, taking the limit as $k$ tends to $+ \infty $ and taking into account condition (\ref{eq:horriblehc}), we obtain (\ref{eq:proba}) as desired.
\end{proof}

Let us come back to the conditional probability $p_{(i,t)}(1|\F_t ) $ introduced in (\ref{eq:transition2hc}). The history is realized only through the effected choices of acceptance or rejection of jumps of the a priori PRM $N.$ Therefore we introduce the time grid $\cG = \{ (i, T_n^i), i \in I \} . $ Any realization of the Hawkes process, conditionally with respect to the PRM $N$, can be identified with an element of $X:=  \{0,1\}^{\cG} .$ We write $x= (x^i)_{i \in I }$ for elements of $X,$ where $x^i =( x^i ( T^i_n))_{n \in \Z}.$ Elements $x \in X$ can be interpreted as point measures. The object of our study is
$$ p_{(i,t)} ( 1 | x ) = \phi_i \left( \sum_{ j } W_{j \to i } \left( x^j  \left( [ L_t^i(x) , t [ \right) \wedge K_{j \to i} \right) \right) .$$
The following proposition establishes a Kalikow-type decomposition for $p_{(i, t)} ( \cdot | x) $ with respect to growing neighbourhoods of $\cV_{\cdot \to i }.$

\begin{prop}\label{prop:1hc}
Grant Assumption \ref{ass:1} and assume that (\ref{eq:summable}) and (\ref{eq:horriblehc}) are satisfied. Fix $t = T_n^i $ for some $n \in \Z $ and $i \in I .$ Then there exists a family of conditional probabilities $( p_{(i,t)}^{[k]} ( \cdot  |x ))_{ k \geq 0 } $ on $\{ 0,1 \} $ satisfying the following properties.
\begin{enumerate}
\item 
For all  $a \in \{ 0,1 \}, p_{(i,t) }^{[0]  } ( a|x):= \frac{r_i^{[0]} ( a)}{\mu_i (0)}$ does not depend on the configuration $x.$
\item
For all  $a \in \{ 0,1 \} ,$ $ k \geq 1 ,$  $ X \ni x \mapsto p_{(i,t)}^{[k] } ( a | x ) $ depends only on the variables $  x^j :  j \in V_i (k) .$ 
\item
For all $x \in  X , $ $ k \geq 1,$ $ p_{(i,t)}^{[k] } ( 1 | x ) \geq 0 $ and $ p_{(i,t)}^{[k] } ( 1 | x ) +p_{(i,t)}^{[k] } ( 0 | x )  = 1 .$
\item
For all $x \in X,$ we have the following convex decomposition.
\begin{equation}\label{eq:dec2hc}
p_{(i,t)} ( a | x )  = 
\sum_{ k \geq 0 } \mu_i(k) p_{(i,t)}^{[k] } ( a | x ) ,
\end{equation}
\end{enumerate}
\end{prop}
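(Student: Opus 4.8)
The plan is to build the conditional probabilities $p_{(i,t)}^{[k]}(\cdot\mid x)$ inductively in $k$, following the classical Kalikow construction, exploiting that the quantities $r_i^{[k]}(a\mid\zeta)$ already defined are the natural ``uniform lower bounds'' on the transition probability once the configuration on $V_i(k)$ is known. The key observation is that for $x\in X$, the relevant configuration $\zeta=\zeta(x)$ seen by neuron $i$ at time $t$ is the element of $\tilde S$ given by $\zeta(j\to i)=x^j([L_t^i(x),t[)\wedge K_{j\to i}$, so that $p_{(i,t)}(1\mid x)=\phi_i(\sum_j W_{j\to i}\zeta(j\to i))$; moreover, $x^j$ for $j\in V_i(k)$ determines exactly $\zeta(j\to i)$ for $j\in V_i(k)$ (here one uses that $i\in V_i(1)$, so $L_t^i(x)$ itself is measurable with respect to $x^i$). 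Hence the map $x\mapsto r_i^{[k]}(a\mid\zeta(x))$ depends only on $x^j$, $j\in V_i(k)$, which will give property (2).

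First I would set $p_{(i,t)}^{[0]}(a\mid x):=r_i^{[0]}(a)/\mu_i(0)$, which by Lemma \ref{lem:muprobahc} (and the definition $\mu_i(0)=\alpha_i(0)=r_i^{[0]}(1)+r_i^{[0]}(0)>0$) is a probability on $\{0,1\}$ independent of $x$, giving (1). Next, for $k\geq 1$, I would define, for $x\in X$ with associated configuration $\zeta=\zeta(x)$,
$$
p_{(i,t)}^{[k]}(a\mid x):=\frac{r_i^{[k]}(a\mid\zeta)-r_i^{[k-1]}(a\mid\zeta)}{\mu_i(k)}
$$
on the event $\mu_i(k)>0$, with an arbitrary fixed choice (say $p_{(i,t)}^{[k]}(1\mid x)=p_{(i,t)}^{[k]}(0\mid x)=1/2$) when $\mu_i(k)=0$. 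Monotonicity of the sets $D_i^k(\zeta)\subset D_i^{k-1}(\zeta)$ gives $r_i^{[k]}(a\mid\zeta)\geq r_i^{[k-1]}(a\mid\zeta)$, hence the numerator is nonnegative; and summing the numerators over $a\in\{0,1\}$ yields $(r_i^{[k]}(1\mid\zeta)+r_i^{[k]}(0\mid\zeta))-(r_i^{[k-1]}(1\mid\zeta)+r_i^{[k-1]}(0\mid\zeta))$, which I must compare with $\mu_i(k)=\alpha_i(k)-\alpha_i(k-1)=\inf_\zeta(\cdots)-\inf_\zeta(\cdots)$.

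The main obstacle is precisely this normalization: it is not automatic that $r_i^{[k]}(1\mid\zeta)+r_i^{[k]}(0\mid\zeta)=\alpha_i(k)$ for the particular $\zeta$ arising from $x$, since $\alpha_i(k)$ is an infimum over all $\zeta\in\tilde S$. I expect the resolution to be that one should instead define $p_{(i,t)}^{[k]}$ using the ``conditional'' pieces correctly: set $\tilde r_i^{[k]}(a\mid\zeta):=r_i^{[k]}(a\mid\zeta)$ but replace the telescoping increments by $\mu_i(k)\,q_i^{[k]}(a\mid\zeta)$ where $q_i^{[k]}(a\mid\zeta)$ is chosen so that $\sum_{l=0}^k\mu_i(l)q_i^{[l]}(a\mid\zeta)=r_i^{[k]}(a\mid\zeta)$; solving this triangular system recursively, $q_i^{[k]}(a\mid\zeta)=(r_i^{[k]}(a\mid\zeta)-r_i^{[k-1]}(a\mid\zeta)+(\alpha_i(k)-[r_i^{[k]}(1\mid\zeta)+r_i^{[k]}(0\mid\zeta)])\cdot(\text{correction}))/\mu_i(k)$, and one checks nonnegativity using $r_i^{[k]}(1\mid\zeta)+r_i^{[k]}(0\mid\zeta)\geq\alpha_i(k)$ together with $\alpha_i$ nondecreasing. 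Once the $q$'s (i.e. the $p_{(i,t)}^{[k]}$'s) are shown nonnegative and to sum to one in $a$ — giving (3) — property (4) follows by taking $k\to\infty$: the partial sums $\sum_{l=0}^k\mu_i(l)p_{(i,t)}^{[l]}(a\mid x)=r_i^{[k]}(a\mid\zeta)$ converge, by the Lipschitz estimate in the proof of Lemma \ref{lem:muprobahc} and condition (\ref{eq:horriblehc}), to $\phi_i(\sum_j W_{j\to i}\zeta(j\to i))=p_{(i,t)}(a\mid x)$ for $a=1$ and to the complementary quantity for $a=0$. Property (2) is then immediate from the measurability observation above, since each $p_{(i,t)}^{[k]}(\cdot\mid x)$ is a function of $r_i^{[l]}(\cdot\mid\zeta(x))$, $l\le k$, hence of $x^j$, $j\in V_i(k)$.
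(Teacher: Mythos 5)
Your setup is correct and you have correctly diagnosed the obstacle: the naive increments $r_i^{[k]}(a\mid\zeta)-r_i^{[k-1]}(a\mid\zeta)$ sum (over $a$) to the configuration-dependent quantity $\tilde\mu_i(k,\zeta):=\bigl(r_i^{[k]}(1\mid\zeta)+r_i^{[k]}(0\mid\zeta)\bigr)-\bigl(r_i^{[k-1]}(1\mid\zeta)+r_i^{[k-1]}(0\mid\zeta)\bigr)$, which differs from the target weight $\mu_i(k)$, so the naive $p^{[k]}_{(i,t)}$ do not normalize to one. You also correctly state the key inequality that the paper exploits, namely $r_i^{[k]}(1\mid\zeta)+r_i^{[k]}(0\mid\zeta)\geq\alpha_i(k)$. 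But your proposed fix is not a proof. The constraint $\sum_{l=0}^{k}\mu_i(l)\,q_i^{[l]}(a\mid\zeta)=r_i^{[k]}(a\mid\zeta)$ is \emph{incompatible} with requiring each $q_i^{[l]}(\cdot\mid\zeta)$ to be a probability: summing it over $a$ forces $\alpha_i(k)=r_i^{[k]}(1\mid\zeta)+r_i^{[k]}(0\mid\zeta)$, which holds only for the minimizing $\zeta$. The ``correction'' term is left undefined, and under the most natural reading (a shared additive correction $-\tfrac12(\beta_k-\beta_{k-1})$ where $\beta_k:=r_i^{[k]}(1\mid\zeta)+r_i^{[k]}(0\mid\zeta)-\alpha_i(k)$) the resulting $q_i^{[k]}(a\mid\zeta)$ can be negative, because $\beta_k$ is not monotone in $k$ even though both $\alpha_i(k)$ and $r_i^{[k]}(1\mid\zeta)+r_i^{[k]}(0\mid\zeta)$ are nondecreasing.

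What the paper actually does, and what is missing from your argument, is an interval-interleaving (or ``coupling of two partitions'') construction: writing $\alpha_i(m,x_t):=\sum_{l=1}^{m}\tilde\mu_i(l,x_t)$, one has $\alpha_i(m)\le\alpha_i(m,x_t)+\mu_i(0)$ for every $m$ (this is your key inequality, restated cumulatively), so the configuration-independent interval $\,]\alpha_i(k-1),\alpha_i(k)]$ meets only configuration-dependent intervals $\,]\alpha_i(m-1,x_t),\alpha_i(m,x_t)]$ with indices $m\le k$. One then defines $p^{[k]}_{(i,t)}(a\mid x)$ as the convex combination of the $\tilde p^{[m]}_{(i,t)}(a\mid x)$ with weights proportional to the Lebesgue measure of the overlap of the two kinds of intervals (formula \eqref{eq:defprobadeco} in the paper). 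Nonnegativity and normalization to a probability in $a$ are then automatic by construction, property (4) follows by resumming, and property (2) follows because only indices $m\le k$ appear, so only $x^j$, $j\in V_i(k)$, enter. Without this explicit construction (or some equivalent device), your proof does not establish items (3) and (4), which are the nontrivial parts of the proposition.
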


\begin{proof}
We identify a configuration $x \in X $ with an element $x_t $ of $\tilde S $ for any $t \in \R $ by introducing 
$$ x_t ( j \to i ) = x^j ( [L_t^i , t[ ) \wedge K_{j \to i } .$$
Note that $ p_{(i,t)} ( a | x ) $ only depends on $x_t.$ We have
\begin{equation}\label{eq:dec3hc}
p_{(i,t)} ( a | x ) = p_{(i,t)} ( a | x_t ) = r_i^{[0]} ( a) + 
\sum_{ k =1 }^N \Delta_i^{[k]}(a|x_t) + \left( p_{(i,t)} ( a | x_t ) - r_i^{[N]} ( a | x_t ) \right) ,
\end{equation}
where
$
\Delta_{i}^{[k]}(a|x_t) := r_i^{[k]} ( a | x_t) - r_i^{[k-1]} ( a | x_t).
$
We start by showing that the term $p_{(i,t)} ( a | x_t ) - r_i^{[N]} ( a | x_t )$ in (\ref{eq:dec3hc}) tends to $0$ as $N \to \infty .$ Indeed, rewriting the definitions, we have
$$
 \Big| p_{(i,t)} ( a | x_t ) - r_i^{[N]} ( a | x_t ) \Big| \\ = \sup_{z \in D_i^N(x_t) } \Big|  \phi_i  \left( \sum_j W_{j\to  i } x_t(j \to i) \right) - \phi_i  \left( \sum_{j } W_{j \to i}  z_{j \to i}  \right)  \Big|.
$$
Using the Lipschitz continuity of $\phi_i$ and the definition of $D_i^N (x_t),$ we get
\begin{eqnarray*}
\Big| p_{(i,t)} ( a | x_t ) - r_i^{[N]} ( a | x_t ) \Big|  &\leq &\sup_{z \in  D_i^N(x_t) } \left( \gamma   \sum_j | W_{j\to  i } | | x_t(j \to i) -  z_{j \to i} |  \right) \\
& \leq &\gamma \sup_{z \in \tilde S}  \left(  \sum_{j \notin V_i(N)} | W_{j\to  i } | | x_t(j \to i) - z_{j \to i} |  \right) 
 \\
& \leq & \gamma \left( \sum_{j \notin V_i(N)} | W_{j\to  i } | K_{j \to i} \right) \underset{N \to + \infty}{\longrightarrow} 0.
\end{eqnarray*}
Taking the limit when $N$ tends to $+ \infty$ in (\ref{eq:dec3hc}), we obtain
$$p_{(i,t)} ( a| x_t ) = r_i^{[0]} ( a) + \left( \sum_{ k \geq 1 } \Delta_{i}^{[k]} ( a | x_t ) \right).$$
Now we put for  $ k \geq 1$ 
\begin{equation}\label{eq:lambdaalmosthc}
 \tilde{\mu}_{i } (k, x_t) :=  \sum_a  \Delta_{i}^{[k]} ( a | x_t )  
\quad \mbox{and} \quad \tilde{p}_{(i,t)}^{[k]} (a |  x)= \frac{\Delta_{i}^{[k ]} ( a | x_t)}{ \tilde{\mu}_{i}  (k,  x_t) },
\end{equation}
where we define $\tilde{p}_{(i,t)}^{[k]} (a| x_t) $ in an arbitrary way if
$ \tilde{\mu}_{i}  (k,  x_t)= 0.$ In this way we can write $  \Delta_{i}^{[k ]} ( a | x_t)=  \tilde{\mu}_{i}  (k,  x_t) \tilde{p}_{(i,t)}^{[k]} (a |  x_t) $
and therefore
\begin{equation}\label{eq:almosthc}
p_{(i,t)} ( a | x_t ) = \mu_i (0) \, p_{(i,t)}^{[0]} (a) + \sum_{k=1}^{\infty} \tilde{\mu}_{i}  (k,  x_t)  \, \tilde{p}_{(i,t)}^{[k]} (a |  x_t ). 
\end{equation}
This decomposition is not yet the one announced in the proposition since the $\tilde{\mu}_{i}  (k,  x_t)$ depend on the configuration $x_t.$
The weights $ \mu_{(i,t)} (k) $ are already defined and they do not depend on the configuration. So we have to define new probabilities $ p_{(i,t)}^{[k]} ,$ based on the previously defined $\tilde p_{(i,t)}^{[k]}.$ % on the intervals  $ ( \alpha_{i}  (k-1) , \alpha_{i}  (k) ]. $ 
To do this, we introduce $
\alpha_{i} (k , x_t):= \sum_{l=1}^k \tilde{\mu}_{i}  (l,  x_t).$ For each $ k \geq 0 $ let $l'$ and $l$ be indexes such that 
$$\alpha_i (l'- 1, x_t ) < \alpha_i (k-1) \le \alpha_i (l', x_t )  < \ldots < \alpha_i (l, x_t )  < \alpha_i (k) \le \alpha_i (l +1, x_t ) .
$$
We then decompose the interval $ ] \alpha_i  (k-1) , \alpha_i  (k) ]$ in the following way.
\begin{multline*} 
] \alpha_i ( k- 1 ) , \alpha_i ( k ) ] = ] \alpha_i (k-1) , \alpha_i (l', x_t ) ] \cup \\
\left( \bigcup_{ m = l' + 1 }^l ] \alpha_i ( m-1 , x_t) , \alpha_i (m, x_t ) ] \right) \cup \; ] \alpha_i ( l, x_t) , \alpha_i ( k ) ] .
\end{multline*}
We then have to define $p_i^{[k]} $ on each of the intervals of this partition. On the first interval $ ] \alpha_i (k-1) , \alpha_i (l', x_t ) ] , $ we use $ \tilde p^{[l']}_i , $ on each interval $] \alpha_i ( m-1 , x_t) , \alpha_i (m, x_t ) ]  ,$ we use $ \tilde p_i^{ [m]} , $ and on  $  ] \alpha_i ( l, x_t) , \alpha_i ( k ) ] ,$ we use $ \tilde p_i^{ [ l+1 ] } .$ 
This leads, for each  $k \geq 0$, to the following definition.
\begin{multline}\label{eq:defprobadeco}
p_{(i,t)}^{[k]} ( a| x_t )  
=
 \sum_{-1 = l' \le l }^{k-1}  \indiq_{\{
\alpha_i (l' - 1 ,x_t) < \alpha_i ({k-1}) \le \alpha_i ({l'},
x_t)\}} 
\indiq_{\{
\alpha_i (l, x_t) < \alpha_i ({k}) \le \alpha_i ({l+1},
x_t)\}}  \\
\left[ \frac{\alpha_i (l', x_t ) - \alpha_i (k-1) }{ 
\mu_i ({k})} \; \tilde{p}_{(i,t)}^{[l']} (a | x )\right.   
  + \sum_{m = l'+1}^{l} \frac{\tilde{\mu}_i  (m , x_t)}{  \mu_i (k) }
\;  \tilde{p}_{(i,t)} ^{[m]} (a | x) 
  \\
\left. + \; \frac{\alpha_i ({k}) - \alpha_i  (l, x_t )}{
\mu_i ({k})} \; \tilde{p}_{(i,t)} ^{[l+1]} (a| x_t) \right] .
\end{multline} 
It can be easily verified that with this definition, we obtain the announced decomposition. 
%Indeed, writing $\sum_{ k \geq 0 } \mu_i(k) p_{(i,t)}^{[k] } ( a | x )$ with this definition leads to the simplification of the denominators $\mu_i(k)$ in  (\ref{eq:defprobadeco}) so that we obtain the terms of the form $\tilde{\mu}_i  (m , x_t) \tilde{p}_{(i,t)} ^{[m]} (a | x)$ present in (\ref{eq:almosthc}). To deal with the terms $\alpha_i (l', x_t ) - \alpha_i (k-1)$ and $\alpha_i ({k}) - \alpha_i  (l, x_t )$ we have to pair them up according to indexes $l'$ and $l$ such that $l'=l+1.$ By this mean we can simplify the terms $\alpha_i ({k})$ to obtain $\tilde{\mu}_i  (l' , x_t) \tilde{p}_{(i,t)} ^{[l']} (a | x)$ and use (\ref{eq:almosthc}) to conclude the proof.
\end{proof}

\subsection{Perfect simulation}\label{subsec:pshc}
In this section we show how to construct the stationary nonlinear Hawkes process with saturation threshold by a {\it perfect simulation procedure}, based on an a priori realization of the  
processes $(N^i, i \in I ).$ Condition (\ref{eq:deltahc}) allows us to decompose the Poisson process ${N}^i$ of intensity $\Lambda_i$ as 
$$
N^i= \widehat{N}^i + \widetilde{N}^i,
$$
where $\widehat{N}^i$ and $\widetilde{N}^i$ are independent Poisson processes with respective intensities $\delta_i$ and $\Lambda_i - \delta_i.$
Conditionally on these processes, we can characterize the process $Z^i$ by the element $x \in X= \{0, 1 \}^{\cG}$ recording the times and the indices of the neuron for which we have accepted a jump. All jumps of $\widehat{N}^i$ are also jumps of $Z^i ;$ we call them the {\it spontaneous spikes}. They appear at a jump time $T_n^i$ of $N^i$ with probability $d_i:= \frac{\delta_i}{\Lambda_i}.$ Moreover, any jump $\widetilde{T_n^i}$ of $\widetilde{N}^i$ will be a jump of $Z^i$ with probability
$$
\frac{1}{1-d_i} \left[ \phi_i  \left( \sum_j  W_{ j \to i} \left( Z^j \left( [ L_t^i , t [ \right) \wedge K_{j \to i} \right) \right) - d_i \right],
$$
and we have to decide for each $i \in I$ and each time $\widetilde{T}_n^i$ whether this jump is accepted or not.
This acceptance/rejection procedure will be achieved by means of the Kalikow-type decomposition and gives rise to a perfect-simulation algorithm that we are going to introduce now. 

%%%%%%%%JUSQU'ICI REPRIS PAR EVA %%%%%%%%%%%%%%%%%%%%%%%%

Fix $i \in I$ and $ t  = \widetilde T_n^i .$  Our first algorithm describes the random space-time subset of neurons and their associated spiking times that can possibly have an influence on the acceptance or rejection of a spike of neuron $i$ at time $t.$ 
\begin{enumerate}
\item We simulate all $\widehat{N}^j$ and $\widetilde N^j$ for each $j \in I.$ We will use the realization of $\widehat{N}^j$ to determine the last spontaneous spiking time 
$$\widehat  L_s^j=\widehat T_{\widehat N_{s-}^j}^j,$$ 
of neuron $j$ before time $s. $ The processes $\widetilde N^j$ give us the non-spontaneous spiking times that we have to accept or reject, which is the aim of the present algorithm.
\item
We attach i.i.d.\ random variables $ K^{(j,s) } \in \N $ to each couple $ (j, s  ) $ with $ s = \widetilde T_m^j  $ for some $m$ which are chosen according to 
$$ \P( K^{(j,s) } = k ) = \mu_j ( k ) , \mbox{ for all } k \geq 0.$$
\item
Suppose that $K^{(i,t) } = k .$ We introduce the set 
\begin{equation}\label{eq:defC1}
C^{(i,t)}_1:=  \left\{  (j,\widetilde T_l^j) \in I \times ]-\infty , t[: j \in V_i(k), \widetilde T_l^j \in [ \widehat L_{t}^i,t [\right\},
\end{equation}
which we call ``first generation of the clan of ancestors" of element $(i,t).$ Here, by convention, $C^{(i,t)}_1 = \emptyset $ if $K^{(i,t) }  = 0.$ 
\item
If $ C^{(i,t)}_1 \neq \emptyset, $ then we iterate the above procedure and simulate for each element of  $C^{(i,t)}_1$ a new clan of ancestors. So we put for any $ n \geq 2,$ 
\begin{equation}\label{eq:defCn}
 C_n^{(i,t)} = \left( \bigcup_{ (j, s) \in C_{n-1}^{(i,t)}} C^{(j,s)}_1  \right) \setminus \left( C_1^{(i,t)} \cup \ldots \cup C_{n-1}^{(i,t)}\right),
\end{equation}
which is the ``$n-$th generation of the clan of ancestors'' of $(i,t).$ 
\end{enumerate}
Notice that if $K^{(i,t) } =0$ in step (1), then the algorithm stops immediately. Also if for all $j \in V_i(k), $ for $k = K^{(i,t)} , $ we have $ \widetilde N^j ( [ \widehat L_{t}^i,t [ ) = 0 ,$ then $C^{(i,t)}_1 = \emptyset  ,$ and the algorithm stops after one step. Else we introduce 
$$ N^{Stop} := \min \{ n : C_n^{(i,t) } = \emptyset \} $$
the number of steps of the algorithm, where $ \min \emptyset := \infty.$ The set $\mathcal C^{(i,t)} := \bigcup_{n=1}^{N^{Stop}} C_n^{(i,t)} $ contains all non-spontaneous spikes which have to be accepted or not and whose possible presence has an influence on the acceptance/rejection decision of $(i,t) $ itself. We will show below that under the conditions of Theorem \ref{theo:1hc}, $N^{Stop} < \infty $ almost surely.  

Once the clans of ancestors are determined, we can realize the acceptance/rejection procedure of the elements in these clans in a second algorithm which is a forward procedure going from the past to the present. We start with the sites for which this decision can be independently made from anything else. During the algorithm,  the set of all sites for which a decision has already been achieved, will then progressively be updated.

\begin{enumerate}

\item At the initial stage of the algorithm, the set of sites for which the acceptance/rejection decision can be achieved is initialized by  
$$D^{(i,t)}:= \left\lbrace (j,s) \in \mathcal C^{(i,t)} , C_1^{(j,s)}= \emptyset \right\rbrace .$$
The sites within this set are precisely those for which the decision can be independently made from anything else. 
\item For each $(j,s) \in D^{(i,t)},$ we simulate, according to the probabilities $ \frac{1}{1- \delta} \left( p_{(j,s)}^{[0]} (a) - \delta \right), $ the state of this site. 
\item For any $(j,s) \in \mathcal C^{(i,t)}$ with $C_1^{(j,s)} \subset D^{(i,t)},$ we then decide, according to the probabilities $ \frac{1}{1- d_i} \left( p_{(j,s)}^{[k]} (a|x) - d_i \right) $ with $k=K^{(j,s)},$ to accept or to reject the presence of a spike of neuron $j$ at time $s.$ Then we update $D^{(i,t)},$ which in an algorithmic way can be written $$D^{(i,t)} \longleftarrow D^{(i,t)} \bigcup \left\lbrace (j,s) \in \mathcal C^{(i,t)}, C_1^{(j,s)} \subset D^{(i,t)} \right\rbrace.$$
\item The update of $D^{(i,t)}$ allows us to repeat the previous step until $(i,t) \in D^{(i,t)}.$ 
\end{enumerate}
Once we have assigned a decision to the element $ (i,t ) $ itself, our {\it perfect simulation} algorithm stops.  Of course, the whole procedure makes sense only if $N^{Stop} < + \infty$ a.s.\ which we will prove now.  For that sake, we define $C^{(i,t)} (k)$ to be the clan of ancestors of element $(i,t)$ of size $k,$ i.e.\
$$
C^{(i,t)}(k):=\left\lbrace (j ,s) \in I \times \R : j \in V_i(k), \exists n \in \Z \mbox{ s.t. } s= \widetilde   T_n^i, s \in \left[ \widehat{L}_t^i;t \right) \right\rbrace ,
$$
with the convention $C^{(i,t)} (0) := \emptyset $ and put
$$ M^{(i, t)} :=  \sum_{ k \geq 1 } \left| C^{(i,t)} (k) \right| \mu_i ( k ) $$
which is the conditional expectation, with respect to the PRM $N,$ of $\big|C_1^{(i,t)} \big|.$ In order to prove that $N^{Stop} < + \infty$ a.s., we compare the process $ | C_n^{(i,t)} | $  with a branching process of reproduction mean $M$ such that $$ \mathbb E \left( M^{(i, t)} \right) \leq M.$$ 
We will prove that the parameters of this branching process are such that it goes extinct a.s.\ implying that $N^{Stop} < \infty $ almost surely. 

Writing $\mathbb E^N$ for the conditional expectation with respect to $N,$ we obtain the following recurrence. 
\begin{equation}\label{eq:214hc}
 \mathbb E^N (  | C_n^{(i,t)} | | C_{n-1}^{ (i,t)} )  \le \sum_{ (j, s ) \in C_{n-1}^{(i,t)} } \sum_{k=1}^{+ \infty} \sharp \left( C^{(j,s)}(k) \backslash C_{n-1}^{(i,t)} \right) \mu_i(k) .
\end{equation}
Here we use $\sum_{k=1}^{+ \infty} \sharp \left( C^{(j,s)}(k) \backslash C_{n-1}^{(i,t)} \right) \mu_i(k)$ instead of $ M^{(j,s)}$ in order to use the independence of $1_{ (j ,s) \in C_{n-1}^{(i,t)} }$ and $\sum_{k=1}^{+ \infty} \sharp \left( C^{(j,s)}(k) \backslash C_{n-1}^{(i,t)} \right) \mu_i(k).$ This independence is due to the properties of the Poisson Random Measure associated with $N$ and the fact that $ C_{n-1}^{(i,t)}$ and $ C_{(j,s)}(k) \backslash C_{n-1}^{(i,t)} $ are disjoint. It will allow us to claim

\begin{equation}\label{eq:claimhc}
\mathbb E \left( \sum_{(j, s ) } 1_{ (j ,s) \in C_{n-1}^{(i,t)} } \sum_{k=1}^{+ \infty} \sharp \left( C^{(j,s)}(k) \backslash C_{n-1}^{(i,t)} \right) \mu_i(k) \right)
 \leq \sum_{(j, s ) } \mathbb E ( 1_{ (j ,s) \in C_{n-1}^{(i,t)}}  ) \mathbb E \left( M_{(j,s)} \right).
\end{equation}
In order to prove the above inequality in a rigorous way, we study the transition governing the evolution \eqref{eq:defCn}. This leads to the definition of the following transition kernel. We fix a neuron $i \in I,$ a time $t= \widetilde T_n^i$ and put
\begin{multline*}
Q \left( (i,t), \bullet \right) = \sum_{k \geq 0} \mu_i(k) \int_0^{+ \infty}  dt_1 \left( \delta_i e^{- \delta_i t_1} \right) \\ \left[ \sum_{j \in V_i(k)} \left( \sum_{n_j \geq 0} e^{ - \left( \Lambda_j - \delta_j \right) t_1} \frac{\left( \Lambda_j - \delta_j \right)^{n_j}}{n_j !} \int_{[t- t_1,t]^{n_j}} d s_1^j ... d s_{n_j}^j \delta_{ \{ (j,s_l^j): l=1,...,n_j \} }  \right) \right].
\end{multline*}
$Q \left( (i,t), dC_1 \right)$ is the law of $C_1^{(i,t)}.$ In the above definition, $k$ is the ``size" of the neighbourhood $V_i(k)$ which is simulated according to the probabilities $\left( \mu_i(k) \right)_{k \in \N };$ $t_1$ is the time between $t$ and $\widehat{L}_t^i$  which is an exponential random variable of parameter $\delta_i;$ $n_j$ is the Poisson random variable $\widetilde N^j \left( \left[ \widehat{L}_t^i , t \right] \right)$ of parameter $\left( \Lambda_j - \delta_j \right) t_1;$ and $\left( s_l^j \right)_{l \in \{ 1,...,n_j \} }$ is the family of jump times in $ \left[ \widehat{L}_t^i , t \right]$ of $\widetilde N^j,$ which, conditionally on the event $\{ \widetilde N^j \left( \left[ \widehat{L}_t^i , t \right] \right) = n_j \},$ are uniform random variables on $\left[ \widehat{L}_t^i , t \right] = [t-t_1,t].$

The definition of the transition kernel $Q(C_1,dC_2)$ defining the law of $C_2^{(i,t)}$ knowing $C_1^{(i,t)}$ is more complicated since $C_2$ is not the result of independent simulations for each element of $C_1.$\footnote{Indeed,  the jump times of a process $\widetilde N^j$ simulated for the clan of ancestors of an element $(i_1,t_1)$ of $C_1$ have to be simulated once and for all, and re-used for the determination of the clan of ancestors of an element $(i_2,t_2)$ with $j \in V_{i_1} (k_1) \cap  V_{i_2} (k_2).$ Similarly, the jump times of $\widehat{N}^i$ have to be simulated once and for all in order to determine all the last spontaneous spiking times for all $(i,t) \in C_1.$} But $Q(C_1,dC_2)$ can be upper bounded (in the sense of inclusion of the simulated sets) by 
$$ \prod_{ (j,s) \in C_1} Q( (j,s) , d C^{(j,s)} )\delta_{( \cup_{ ( j,s) \in C_1} C^{(j, s ) })}( d C_2) .$$
This upper bound simulates more random variables than necessary leading to bigger clans of ancestors. Since we are only interested in obtaining upper-bounds on the number of elements in the clans of ancestors, we will therefore work with this upper bound and obtain
\begin{multline}\label{eq:cores}
\mathbb E \left( \sum_{(j, s ) } 1_{ (j ,l) \in C_{n-1}^{(i,t)} } \sum_{k=1}^{+ \infty} \sharp \left( C^{(j,s)}(k) \backslash C_{n-1}^{(i,t)} \right) \mu_i(k) \right) \\
 \le  \int_{(i,t)^c} Q((i,t),dC_1) \int_{C_1^c} Q(C_1,dC_2) ... \int_{\left( \cup_{\kappa = 1}^{n-2} C_\kappa \right)^c} Q(C_{n-2},dC_{n-1})
 \\ \left[ \sum_{(j,s) \in C_{n-1}} \int_{\left( \cup_{\kappa = 1}^{n-1} C_\kappa \right)^c} Q((j,s),dC) |C| \right] .
\end{multline}
Observe that
$$
\int_{\left( \cup_{\kappa = 1}^{n-1} C_\kappa \right)^c} Q((j,s),dC) |C| \leq \sum_{k \geq 0} \mu_j(k) \int_0^{+ \infty} dt_1 \left[ \left( \delta_j e^{- \delta_j t_1} \right) \sum_{a \in V_j(k)} \left( \left( \Lambda_a - \delta_a \right)t_1 \right) \right].
$$
The right term of the above inequality being exactly $\mathbb E \left( M^{(j,s)} \right),$ this proves claim (\ref{eq:claimhc}). 

As a consequence, we can compare the process $ | C_n^{(i,m)} | $  with a branching process of reproduction mean $ M .$ 
It remains to verify that this branching process gets extinct almost surely in finite time.  We will prove that actually $$ \sup_{i \in I, t \in \R } \mathbb E ( M^{(i,t) }) < 1.$$

\begin{prop}

\begin{equation}\label{eq:ubmuhc}
\mu_i (k) \leq \gamma \left( \sum_{j \in \partial V_i(k-1)} \left| W_{ j \to i } \right| K_{j \to i} \right), \quad \forall k \geq 1.
\end{equation}

\end{prop}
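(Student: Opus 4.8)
The plan is to reduce the statement to a one–step oscillation estimate for the map $\eta \mapsto \phi_i(\sum_j W_{j\to i}\eta(j\to i))$ on $\tilde S$, which I abbreviate $\phi_i\circ W_i$ with $W_i\eta := \sum_j W_{j\to i}\eta(j\to i)$; for $A\subset\tilde S$ I write $\mathrm{osc}_A(\phi_i\circ W_i) := \sup_A \phi_i\circ W_i - \inf_A \phi_i\circ W_i$. The starting point is the identity already obtained inside the proof of Lemma~\ref{lem:muprobahc}: for every $k\geq 0$ (with the convention $D_i^0(\zeta):=\tilde S$) and every $\zeta\in\tilde S$,
$$ r_i^{[k]}(1|\zeta) + r_i^{[k]}(0|\zeta) = 1 - \mathrm{osc}_{D_i^k(\zeta)}(\phi_i\circ W_i) . $$
Taking the infimum over $\zeta$ (and noting this also reads off $\alpha_i(0)=r_i^{[0]}(1)+r_i^{[0]}(0)$) gives $\alpha_i(k) = 1 - \sup_{\zeta\in\tilde S}\mathrm{osc}_{D_i^k(\zeta)}(\phi_i\circ W_i)$ for all $k\geq 0$, so that
$$ \mu_i(k) = \alpha_i(k) - \alpha_i(k-1) = \sup_{\zeta\in\tilde S}\mathrm{osc}_{D_i^{k-1}(\zeta)}(\phi_i\circ W_i) - \sup_{\zeta\in\tilde S}\mathrm{osc}_{D_i^k(\zeta)}(\phi_i\circ W_i) . $$
Hence it suffices to show, for every fixed $\zeta\in\tilde S$, that $\mathrm{osc}_{D_i^{k-1}(\zeta)}(\phi_i\circ W_i) \leq \sup_{\zeta'\in\tilde S}\mathrm{osc}_{D_i^k(\zeta')}(\phi_i\circ W_i) + \gamma\sum_{j\in\partial V_i(k-1)}|W_{j\to i}|K_{j\to i}$, and then take the supremum over $\zeta$.

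For the one–step estimate I would fix $\zeta$ and $\e>0$, pick $\eta,\eta'\in D_i^{k-1}(\zeta)$ with $|\phi_i(W_i\eta)-\phi_i(W_i\eta')| \geq \mathrm{osc}_{D_i^{k-1}(\zeta)}(\phi_i\circ W_i)-\e$, and interpolate through a ``glued'' configuration $\bar\eta$ that agrees with $\eta$ on the edges $\{(j\to i):j\in V_i(k)\}$ and with $\eta'$ on all remaining edges of $\cE$. Since $\tilde S=\prod_{e\in\cE}\{0,\dots,K_e\}$ is a product set, $\bar\eta\in\tilde S$; moreover $\bar\eta$ and $\eta$ coincide on the coordinates pinned down by $D_i^k$, so both belong to $D_i^k(\eta)$ and thus $|\phi_i(W_i\eta)-\phi_i(W_i\bar\eta)| \leq \sup_{\zeta'}\mathrm{osc}_{D_i^k(\zeta')}(\phi_i\circ W_i)$. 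On the other hand $\bar\eta$ and $\eta'$ can differ only on $\{(j\to i):j\in\partial V_i(k-1)\}$ — they agree with $\zeta$ on $V_i(k-1)$ and with $\eta'$ outside $V_i(k)$ — so by the Lipschitz bound of Assumption~\ref{ass:1} and the a priori bound $0\leq\eta(j\to i)\leq K_{j\to i}$ valid on $\tilde S$,
$$ |\phi_i(W_i\bar\eta)-\phi_i(W_i\eta')| \leq \gamma\sum_{j\in\partial V_i(k-1)}|W_{j\to i}|\,|\bar\eta(j\to i)-\eta'(j\to i)| \leq \gamma\sum_{j\in\partial V_i(k-1)}|W_{j\to i}|K_{j\to i} . $$
Combining the last two displays by the triangle inequality, then letting $\e\to0$ and taking $\sup_\zeta$, yields the claimed bound. (For $k=1$ one has $\partial V_i(0)=\{i\}$ and $W_{i\to i}=0$ by Assumption~\ref{ass:2}, so the bound correctly forces $\mu_i(1)=0$, consistent with $\mu_i$ being a probability.)

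The step I expect to carry the weight of the argument is the gluing interpolation: it is what converts a difference of two suprema, taken over possibly different optimal environments $\zeta$ and $\zeta'$, into a genuine telescoping estimate, and it rests on two structural facts — that $\tilde S$ is a product space, so that interpolated configurations remain admissible, and that cutting $D_i^{k-1}(\zeta)$ along the $\partial V_i(k-1)$-coordinates produces sets that are again of the form $D_i^k(\cdot)$. Everything downstream is just the uniform Lipschitz estimate together with the uniform bound $\eta(j\to i)\leq K_{j\to i}$ on $\tilde S$, exactly as in the proof of Lemma~\ref{lem:muprobahc}.
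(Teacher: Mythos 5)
Your proof is correct, and it takes a genuinely different route from the paper's. The paper selects a single near-optimal $\eta$ for $\alpha_i(k-1)$, uses the trivial bound $\alpha_i(k) \le r_i^{[k]}(1|\eta) + r_i^{[k]}(0|\eta)$ to compare the two levels on the same $\eta$, and then, crucially invoking the monotonicity of $\phi_i$, reduces the resulting difference of oscillations to a difference of extrema of the linear functional $\sum_j W_{j\to i}\,z_{j\to i}$, computed explicitly by setting free coordinates to $0$ or $K_{j\to i}$ according to the sign of $W_{j\to i}$. Your gluing argument replaces this with a triangle inequality through the interpolated configuration $\bar\eta$, and it yields the same constant $\gamma\sum_{j\in\partial V_i(k-1)}|W_{j\to i}|K_{j\to i}$ while using only the Lipschitz bound of Assumption~\ref{ass:1} — monotonicity of $\phi_i$ plays no role — and never needing to identify extremal configurations. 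Both proofs ultimately hinge on the same structural facts, which you isolate cleanly: $\tilde S$ is a product space (so interpolations stay admissible) and passing from $D_i^{k-1}$ to $D_i^k$ pins down exactly the $\partial V_i(k-1)$-coordinates, each of which ranges over $[0,K_{j\to i}]$. Your reformulation of $\mu_i(k)$ as a difference of suprema of oscillations and the subsequent one-step estimate are a nice way to organize the telescoping, slightly more general than the paper's route since they do not rely on $\phi_i$ being non-decreasing.
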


\begin{proof}

Using the definition of $\mu_i(k),$ 
$$
\mu_i (k) = \inf_{\zeta \in {\tilde S}} \left( r_i^{[k]} (1 | \zeta )  +  r_i^{[k]} (0 | \zeta ) \right) - \inf_{\zeta \in {\tilde S}} \left( r_i^{[k-1]} (1 | \zeta )  +  r_i^{[k-1]} (0 | \zeta ) \right) .
$$
Fix $\varepsilon > 0$ and let $\eta \in \tilde S$ be such that 
$$
 r_i^{[k-1]} (1 | \eta )  +  r_i^{[k-1]} (0 | \eta ) \leq \inf_{\zeta \in {\tilde S}} \left( r_i^{[k-1]} (1 | \zeta )  +  r_i^{[k-1]} (0 | \zeta ) \right) + \varepsilon .
$$ 
Then
$$
\mu_i (k) \leq  \left( r_i^{[k]} (1 | \eta )  +  r_i^{[k]} (0 | \eta ) \right) -  \left( r_i^{[k-1]} (1 | \eta )  +  r_i^{[k-1]} (0 | \eta ) \right)  + \varepsilon
$$
%Here we can assume, without loss of generality that $L_{\bar T_m^i}^i (u) = - \infty$ because if $L_{\bar T_m^i}^i (u) > - \infty,$ let $u'$ be such that $L_{\bar T_m^i}^i (u')= - \infty, u'=0 $ before $L_{\bar T_m^i}^i (u)$ and $u'=u$ after, then $u'$ gives an equivalent configuration.
and
\begin{multline*}
\mu_i (k) - \varepsilon \leq \inf_{ z\in D_i^k(\eta)   } \phi_i \left( \sum_j W_{ j \to i } z_{j \to i}  \right)
- \inf_{ z\in D_i^{k-1}(\eta)   } \phi_i \left( \sum_j W_{ j \to i } z_{j \to i}  \right)
\\ +  \sup_{ z\in D_i^{k-1}(\eta)   } \phi_i \left( \sum_j W_{ j \to i } z_{j \to i}  \right)
- \sup_{ z\in D_i^k(\eta)   }\phi_i \left( \sum_j W_{ j \to i } z_{j \to i}  \right).
\end{multline*}
We will simplify this expression detailing the configurations that realize the extrema.  Since $\phi_i$ is non-decreasing, we have to study the upper and lower bounds of $\sum_j W_{ j \to i } z_{j \to i}$ for $z \in D_i^k(\eta).$
Recall that, due to the definition of $D_i^k(\eta),$ $z_{j \to i}$ is equal to $\eta_{j \to i}$ if $j \in V_i(k).$ Thus the extrema of $\sum_j W_{ j \to i } z_{j \to i}$ are reached for extreme values of $z_{j \to i}$ for $j \notin V_i(k)$ and these extreme values are $0$ or $K_{j \to i}.$ The upper bound is reached for $z_{j \to i}=K_{j \to i}$ with $j \notin V_i(k)$ such that $W_{j \to i} > 0$ and $z_{j \to i}=0$ with $j \notin V_i(k)$ such that $W_{j \to i} \leq 0.$ On the contrary, the lower bound is reached for $z_{j \to i}=K_{j \to i}$ with $j \notin V_i(k)$ such that $W_{j \to i} < 0$ and $z_{j \to i}=0$ with $j \notin V_i(k)$ such that $W_{j \to i} \geq 0.$ 

Using condition (\ref{eq:Lip}) we get therefore
$$
\mu_i (k)- \varepsilon \leq  \gamma \left[ \left( \sum_{j \notin V_i(k-1)} \left| W_{ j \to i } \right| K_{j \to i} \right) - \left( \sum_{j \notin V_i(k)} \left| W_{ j \to i } \right| K_{j \to i} \right) \right]
$$
so that finally 
$$
\mu_i (k) \leq \gamma \left( \sum_{j \in \partial V_i(k-1)} \left| W_{ j \to i } \right| K_{j \to i} \right)
$$
which is the announced upper-bound (\ref{eq:ubmuhc}).
\end{proof}
We are now able to finish the proof of Theorem \ref{theo:1hc}. We have the following first upper-bound for  $M^{(i,t) }$ thanks to the inequality (\ref{eq:ubmuhc})
$$ \frac{M^{(i,t) }}{\gamma}  \leq \sum_{k \geq 1} \left| C^{(i,t)} (k) \right| \left( \sum_{j \in \partial V_i(k-1)} \left| W_{ j \to i } \right| K_{j \to i} \right).
$$
Consequently, by definition of $C^{(i,t)}(k),$ 
\begin{eqnarray*}
\frac{\mathbb E \left( M_{(i,t)}\right)}{\gamma} 
&\leq& \sum_{k \geq 1} \mathbb E \left( \sum_{j \in V_i(k)} \widetilde N^j \left( \left[ \widehat{L}_t^i, t \right] \right) \right) \left( \sum_{j \in \partial V_i(k-1)} \left| W_{ j \to i } \right| K_{j \to i} \right) \\
&=& \frac{\mathbb E \left( M_{(i,t)}\right)}{\gamma} 
\leq \sum_{k \geq 1} \left[ \left( \sum_{j \in V_i(k)} \frac{\Lambda_j-\delta_j}{\delta_i} \right) \left( \sum_{j \in \partial V_i(k-1)} \left| W_{ j \to i } \right| K_{j \to i} \right) \right].
\end{eqnarray*}
Assumption (\ref{eq:horriblehc}) of Theorem 1 implies that $ M := \sup_i \mathbb E\left( M_{(i,t)}\right) < 1 .$ Consequently, 
$$\mathbb E (| C_n^{(i,t)} |) \le M^n \to 0 \mbox{ when } n \to \infty , $$
implying that the process $\left( C_n^{(i,t)} \right)_{n \in \N}$ goes extinct almost surely in finite time. As a consequence, $N^{Stop}< \infty $ almost surely and the perfect simulation algorithm stops after a finite number of steps.  This achieves the proof of the construction of the process.
\begin{flushright}
$\square$
\end{flushright}

\section{Proof of Theorem  \ref{theo:1} .}

\subsection{Some comments}
We now give the proof of Theorem \ref{theo:1}. As in the proof of Theorem \ref{theo:1hc} we use a Kalikow-type decomposition of the transition probabilities \eqref{eq:transition2hc}. We will use the same notations for objects with slightly different definitions but playing the same role in the proof, in order to avoid too heavy notations.

The main difference between the two models is that in the first one, no leakage term is present, whereas the second model contains a leak term through the functions $g_j.$  
Moreover, in the first model, the presence of thresholds allows us to obtain a Kalikow-type decomposition according to space (and not to time) with probabilities $\mu_i(k)$ that are deterministic and do not depend on the realization of the Poisson Random Measure $N.$ This is crucial because it gives us an independence argument in order to obtain statement (\ref{eq:claimhc}).  However, the statement of Theorem \ref{theo:1hc} is at the cost of two assumptions. The first one is the presence of spontaneous spikes due to condition (\ref{eq:deltahc}). The second assumption is the existence of the thresholds $K_{j \to i}$. 

In the second model, we will introduce a space-time decomposition in random environment with probabilities $\mu_{(i,t)}(k)$ that are $\sigma \left( N \right)$-measurable random variables. The independence argument leading to (\ref{eq:claimhc}) will here be ensured by the condition (\ref{eq:structure}) that we impose on the structure of the neural network.

\subsection{Kalikow-type decomposition}

In this section, the Kalikow-type decomposition will take place in a random environment depending on the realization of the Poisson Random Measure $N.$ We will consider space-time neighbourhoods $\mathbb  V_t^i(k) := V_i(k) \times [t-k,t[. $ 

We work with the state pace $X = \{0, 1 \}^{\mathcal G} $ where ${\mathcal G} $ is the time grid $ \{(i,T_n^i),(i,n) \in I \times \Z \} .$ Note that each element $x$ of $X$ can be interpreted as a discrete measure dominated by $dN.$ 
We fix a time $t=T_n^i$ and we introduce the following notations
\begin{equation}
r_{(i,t)}^{[0]} ( 1) = \inf_{ x \in X} \phi_i \left( \sum_j W_{ j \to i } \int_{ L_t^i (x) }^t g_j (t-s) dx_j(s)  \right) ,
\end{equation}
where $dx_j = \sum_{m \in \Z} x \left(j,T_m^j \right)  \delta_{T_m^j},$ $ L_t^i ( x) = \sup\{ T_m^i < t : x ( i, T_m^i ) = 1 \} ,$  and
\begin{equation}
r_{(i,t)}^{[0]} ( 0) = \inf_{ x \in {\mathcal S}} \left( 1-\phi_i \left( \sum_j W_{ j \to i } \int_{ L_t^i (x) }^t g_j (t-s) dx_j(s)  \right) \right).
\end{equation}
Now fix $x \in X$ and define for each $k \geq 1$ the set $D_{(i,t)}^k(x)$ by
$$
D_{(i,t)}^k(x) := \{ z \in {\mathcal S} : z \left( \mathbb V_t^i ( k)  \right) = x \left( \mathbb V_t^i ( k)  \right) \} 
$$ and put
\begin{equation}
r_{(i,t)}^{[k]} (1 | x ) = \inf_{ z\in D_{(i,t)}^k(x)   } \phi_i \left( \sum_j W_{ j \to i } \int_{ L_t^i (z) }^t g_j (t-s) dz_j(s) \right) ,
\end{equation}
\begin{equation}
r_{(i,t)}^{[k]} (0 | x ) = \inf_{ z\in D_{(i,t)}^k(x)   } \left( 1- \phi_i \left( \sum_j W_{ j \to i } \int_{ L_t^i (z) }^t g_j (t-s) dz_j(s) \right) \right).
\end{equation}
We define 
$$ \alpha_{(i,t)} (0) := \mu_{(i,t)} ( 0 ) := r_{(i,t)}^{[0]} ( 1)  + r_{(i,t)}^{[0]} ( 0 ) , \quad \alpha_{(i,t)}^{[k]} : = \inf_{x \in {\mathcal S}} \left( r_{(i,t)}^{[k]} (1 | x )  +  r_{(i,t)}^{[k]} (0 | x ) \right) ,$$
and finally, for all $k \geq 1$ 
$$ \mu_{(i,t)}(k)  :=   \alpha_{(i,t)}^{[k]} - \alpha_{(i,t)}^{[k-1]} . $$
These definitions are similar to those of Section 4.1 and play the same role in the proof. Note that in this section we have a dependence on time denoted by the index $t.$

\begin{lem}
$\left( \mu_{(i,t)}(k)\right)_{k \geq 0}$ defines a probability on $\N. $ 
\end{lem}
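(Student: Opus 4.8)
The plan is to mimic exactly the proof of Lemma~\ref{lem:muprobahc} (the analogous statement for the first model), adapting each step to the random-environment setting. First I would note that since $D_{(i,t)}^{k}(x) \subset D_{(i,t)}^{k-1}(x)$ (the constraint $z(\mathbb V_t^i(k)) = x(\mathbb V_t^i(k))$ is more restrictive as $k$ grows, because $\mathbb V_t^i(k) = V_i(k) \times [t-k,t[$ is increasing in $k$ in both the spatial and the temporal coordinate), the infima $r_{(i,t)}^{[k]}(a|x)$ are non-decreasing in $k$ for each fixed $a \in \{0,1\}$ and $x$. Hence $\alpha_{(i,t)}^{[k]}$ is non-decreasing in $k$, so $\mu_{(i,t)}(k) = \alpha_{(i,t)}^{[k]} - \alpha_{(i,t)}^{[k-1]} \geq 0$ for $k \geq 1$, and $\mu_{(i,t)}(0) = r_{(i,t)}^{[0]}(1) + r_{(i,t)}^{[0]}(0) \geq 0$. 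It therefore only remains to prove $\sum_{k \geq 0}\mu_{(i,t)}(k) = 1$, which by telescoping is equivalent to $\lim_{k \to \infty}\alpha_{(i,t)}^{[k]} = 1$, and since $\alpha_{(i,t)}^{[k]} = \inf_{x}\big( r_{(i,t)}^{[k]}(1|x) + r_{(i,t)}^{[k]}(0|x)\big) \leq 1$ always, it suffices to show that for every $x \in X$,
\begin{equation*}
\lim_{k \to \infty}\left( r_{(i,t)}^{[k]}(1|x) + r_{(i,t)}^{[k]}(0|x)\right) = 1.
\end{equation*}

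Next, as in the first model, I would write
\begin{equation*}
r_{(i,t)}^{[k]}(1|x) + r_{(i,t)}^{[k]}(0|x) = 1 - \left[ \sup_{z \in D_{(i,t)}^k(x)} \phi_i\!\left( \sum_j W_{j \to i}\! \int_{L_t^i(z)}^t g_j(t-s)\,dz_j(s)\right) - \inf_{z \in D_{(i,t)}^k(x)} \phi_i\!\left( \sum_j W_{j \to i}\! \int_{L_t^i(z)}^t g_j(t-s)\,dz_j(s)\right) \right],
\end{equation*}
and bound the bracketed oscillation using the Lipschitz property \eqref{eq:Lip} of $\phi_i$. The point is that for $z, z' \in D_{(i,t)}^k(x)$ the inner arguments agree on the space-time window $\mathbb V_t^i(k)$, so their difference is controlled by contributions coming from outside that window: spikes of neurons $j \notin V_i(k)$ (which by Assumption~\ref{ass:3} forces looking only at $j \in I_{n-1}$, and the structural condition \eqref{eq:structure} makes the relevant index set behave well), and spikes of any neuron at times older than $t-k$, where the weight $g_j(t-s) \leq g_j(k)$ is small. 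One must also handle the fact that $L_t^i(z)$ itself depends on $z$; but since for $z \in D_{(i,t)}^k(x)$ the values $z(i, T_m^i)$ for $T_m^i \in [t-k,t[$ are pinned to $x$, the last spike time $L_t^i(z)$ either lies in $[t-k,t[$ and is then determined by $x$, or lies before $t-k$, in which case the extra mass integrated against $g_j$ over $[L_t^i(z), t-k)$ is again weighted by values $\leq g_j(k)$. Collecting terms, one gets a bound of the form
\begin{equation*}
\sup_{z,z' \in D_{(i,t)}^k(x)}\Big| \cdots \Big| \leq \gamma\left( \sum_{j \in V_i(k)} |W_{j \to i}|\,\Lambda_j \!\int_k^{\infty}\! g_j(s)\,ds + \sum_{j \notin V_i(k)} |W_{j \to i}|\,\Lambda_j \!\int_0^{\infty}\! g_j(s)\,ds \right),
\end{equation*}
which plays the role of the quantity $\gamma \sum_{j \notin V_i(k)}|W_{j \to i}| K_{j \to i}$ in the first model.

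Finally I would argue that this upper bound tends to $0$ as $k \to \infty$: the second sum vanishes because $\sum_j |W_{j \to i}| < \infty$ by \eqref{eq:summable2} and $V_i(k) \uparrow \mathcal V_{\cdot \to i} \cup \{i\}$, so it is the tail of a convergent series; the first sum vanishes because $\int_0^\infty g_j(x)\,dx < \infty$ forces $\int_k^\infty g_j(s)\,ds \to 0$, and one invokes condition \eqref{eq:horrible} to get a uniform-in-$i$ summable majorant allowing dominated convergence / interchange of limit and sum. This gives $\lim_k\big(r_{(i,t)}^{[k]}(1|x)+r_{(i,t)}^{[k]}(0|x)\big)=1$ for each $x$, hence $\sum_k \mu_{(i,t)}(k)=1$, completing the proof. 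The main obstacle I anticipate is the careful bookkeeping in the oscillation estimate: unlike the threshold model where a spike contributes a bounded amount $K_{j \to i}$, here a configuration $z$ can have arbitrarily many spikes, so one genuinely needs the a priori realization of the PRM $N$ (the random environment) to control the number of potential spikes of each $\widetilde N^j$ on $[t-k,t[$, and to keep the dependence on the random last-spike-time $L_t^i(z)$ from spoiling the estimate — this is exactly why the statement lives "in a random environment" and why the constants $\mu_{(i,t)}(k)$ are now $\sigma(N)$-measurable rather than deterministic.
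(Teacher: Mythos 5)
Your overall strategy matches the paper's: show $\mu_{(i,t)}(k)\geq 0$ via nesting of $D_{(i,t)}^{k}(x)$, reduce $\sum_k \mu_{(i,t)}(k)=1$ to showing the oscillation of $\phi_i$'s argument over $D_{(i,t)}^k(x)$ vanishes as $k\to\infty$, and control that oscillation by the Lipschitz constant $\gamma$ times contributions from outside $\mathbb V_t^i(k)$ (including the $L_t^i(z)=-\infty$ worst case, which you handle correctly). However, there is a genuine gap in the key estimate. You write the pathwise oscillation bound as the \emph{deterministic} quantity
\begin{equation*}
\gamma\Big(\sum_{j\in V_i(k)}|W_{j\to i}|\,\Lambda_j\int_k^\infty g_j(s)\,ds \;+\; \sum_{j\notin V_i(k)}|W_{j\to i}|\,\Lambda_j\int_0^\infty g_j(s)\,ds\Big),
\end{equation*}
but this is not a valid pathwise bound. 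The configurations $z\in D_{(i,t)}^k(x)$ are sub-collections of the atoms of $N$, so the oscillation is genuinely controlled by integrals against the \emph{random} measures $dN_s^j$, not against $\Lambda_j\,ds$; a Poisson process is not dominated by its intensity measure pathwise, so you cannot replace $dN_s^j$ by $\Lambda_j\,ds$ at this stage. The quantity you wrote is only the \emph{expectation} of the correct random bound.

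This matters for the conclusion: once the bound on $1-\big(r_{(i,t)}^{[k]}(1|x)+r_{(i,t)}^{[k]}(0|x)\big)$ is a random variable, the "dominated convergence" remark is not enough to get the required almost sure statement. The paper closes this gap with an additional argument you would need: the random bound is a non-increasing sequence in $k$ of nonnegative $\sigma(N)$-measurable random variables, each with finite expectation (computable since $N^j$ has intensity $\Lambda_j$), and these expectations tend to $0$ under condition \eqref{eq:horrible}; by monotonicity the sequence converges a.s.\ to a nonnegative limit whose expectation is $0$, hence the limit is $0$ a.s. You correctly flag in your closing paragraph that the $\mu_{(i,t)}(k)$ are random and that the realization of $N$ is needed to control the number of spikes, but that observation is not incorporated into the bound you actually state, and without the monotonicity-plus-vanishing-expectation step the proof does not go through.
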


\begin{proof}
With similar arguments to those of the proof of Lemma \ref{lem:muprobahc} and considering the worst case where $L_t^i (x) = - \infty,$ we obtain that 
\begin{multline}\label{eq:ubrest}
0 \leq 1- \left( r_{(i,t)}^{[k]} (1 | x ) + r_{(i,t)}^{[k]} (0 | x ) \right) \\ \leq \gamma \left( \sum_{j \in V_i(k) } | W_{j\to  i } | \int_{ -\infty }^{t-k} | g_j (t-s) | \, d N_s^j  + \sum_{j \notin V_i(k) } | W_{j\to  i } | \int_{ -\infty }^t | g_j (t-s) | \, d N_s^j \right).
\end{multline}
The right term of this inequality is a non-increasing sequence of positive random variables where each term is upper-bounded by a random variable of expectation
$$
\sum_{j \in V_i(k) } | W_{j\to  i } | \Lambda_j \int_{ k }^{+\infty} | g_j (s) | \, d s + \sum_{j \notin V_i(k) } | W_{j\to  i } | \Lambda_j \int_{ 0 }^{+\infty} | g_j (s) | \, ds.
$$
Due to condition (\ref{eq:horrible}) the previous expression goes to $0$ when $k$ goes to $+ \infty$ and this gives us the almost sure convergence of the sequence to $0$ due to the monotonicity of the sequence. Consequently, we have $\mu_{(i,t)}(k) \geq 0$ for all $k \geq 0$ and  
$$
\sum_{k \geq 0} \mu_{(i,t)}(k) =1.
$$
\end{proof}
Using these probabilities $\left( \mu_{(i,t)}(k) \right)_{k \geq 0},$ we obtain a Kalikow-type decomposition for
$$ p_{(i,t)} ( 1 | x ) = \phi_i \left( \sum_{ j } W_{j \to i } \int_{ L_t^i (x) }^t g_j (t-s) dx_j(s)  \right) .$$
Recall that $ \F_t $ is the sigma-field generated by $ Z^i ( ] s, u ] ) , s \le u \le t , i \in I,$ and that we work on the probability space $(\Omega , \mathcal A , \mathbb F)$ defined in Section \ref{subsec:setting}.

\begin{prop}\label{prop:1} 
Assume that conditions (\ref{eq:Lip}), (\ref{eq:summable}) and (\ref{eq:horrible}) are satisfied. Then there exists  a family of conditional probabilities $\left( p_{(i,t)}^{[k]} ( a |x ) \right)_{ k \geq 0 } $ satisfying the following properties.
\begin{enumerate}

\item 
For all  $a \in \{ 0,1 \}, p_{(i,t) }^{[0]  } ( a|x):= \frac{r_{(i,t)}^{[0]} ( a)}{\mu_{(i,t)} (0)}
$ does not depend on the configuration $x.$
\item
For all  $a \in \{ 0,1 \} ,$ $ k \geq 1 ,$  $ {\mathcal S} \ni x \mapsto p_{(i,t)}^{[k] } ( a | x ) $ depends only on the variables \\ $ \left( x ( j, T_n^j ): (j,T_n^j) \in \mathbb V_t^i (k) \right) .$ 
\item
For all $x \in  {\mathcal S} , $ $ k \geq 1,$ $ p_{(i,t)}^{[k] } ( 1 | x ) \in [0, 1 ], $ $ p_{(i,t)}^{[k] } ( 1 | x ) +p_{(i,t)}^{[k] } ( 0 | x )  = 1 .$
\item
For all $a \in \{ 0,1 \}, x \in {\mathcal S}, p_{(i,t)}^{[k] } ( a | x )$ and $ \mu_{(i,t)}(k) $ are $ {\mathcal F}_t -\mbox{measurable} $  random variables.

\item
For all $x \in {\mathcal S},$ we have the following convex decomposition
\begin{equation}\label{eq:dec2}
p_{(i,t)} ( a | x )  = 
\sum_{ k \geq 0 } \mu_{(i,t)}(k) p_{(i,t)}^{[k] } ( a | x ) ,
\end{equation}

\end{enumerate}
\end{prop}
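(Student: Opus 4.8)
The plan is to follow, essentially line by line, the argument of Proposition \ref{prop:1hc}, carrying along the extra dependence on the environment $N$ and verifying the new measurability statement (item (4)). As a first step I would identify a configuration $x\in X$ with the object on which $p_{(i,t)}$ truly depends, namely the pair consisting of $L_t^i(x)$ and the restriction of $dx_j$ to $]L_t^i(x),t[$ for $j\in\cV_{\cdot\to i}$; since $g_j$ is fixed, $p_{(i,t)}(a\mid x)$ is a function of this data alone. Then I would write the telescoping identity
\begin{equation*}
p_{(i,t)}(a\mid x)=r_{(i,t)}^{[0]}(a)+\sum_{k=1}^{N}\Delta_{(i,t)}^{[k]}(a\mid x)+\bigl(p_{(i,t)}(a\mid x)-r_{(i,t)}^{[N]}(a\mid x)\bigr),
\end{equation*}
with $\Delta_{(i,t)}^{[k]}(a\mid x):=r_{(i,t)}^{[k]}(a\mid x)-r_{(i,t)}^{[k-1]}(a\mid x)$, exactly as in \eqref{eq:dec3hc}.

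The second step is to show that the remainder $p_{(i,t)}(a\mid x)-r_{(i,t)}^{[N]}(a\mid x)$ tends to $0$ as $N\to\infty$. Using the Lipschitz continuity of $\phi_i$ and the definition of $D_{(i,t)}^N(x)$, this difference is bounded, for each fixed realization of $N$, by the right-hand side of \eqref{eq:ubrest} with $k=N$ (taking the worst case $L_t^i(z)=-\infty$ and comparing $z$ with $x$ off $\mathbb V_t^i(N)$). By the lemma preceding the proposition this upper bound is a non-increasing sequence of positive random variables converging $\P$-a.s.\ to $0$; in particular, on the a.s.\ event where this holds, letting $N\to\infty$ yields
\begin{equation*}
p_{(i,t)}(a\mid x)=r_{(i,t)}^{[0]}(a)+\sum_{k\ge 1}\Delta_{(i,t)}^{[k]}(a\mid x).
\end{equation*}

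Third, I would set $\tilde\mu_{(i,t)}(k,x):=\sum_a\Delta_{(i,t)}^{[k]}(a\mid x)$ and $\tilde p_{(i,t)}^{[k]}(a\mid x):=\Delta_{(i,t)}^{[k]}(a\mid x)/\tilde\mu_{(i,t)}(k,x)$ (arbitrarily when the denominator vanishes), giving the intermediate decomposition with weights depending on $x$. The passage from these $x$-dependent weights to the deterministic-in-$x$ weights $\mu_{(i,t)}(k)$ is carried out by the very same interval-splitting procedure as in the proof of Proposition \ref{prop:1hc} (formulas \eqref{eq:almosthc}--\eqref{eq:defprobadeco}), using here that $\sum_{l\le k}\tilde\mu_{(i,t)}(l,x)=\alpha_{(i,t)}^{[k]}$ at $x$ realizing the relevant infimum; I would simply refer to that construction rather than repeat it. Properties (1), (2), (3) and (5) then follow as before: (2) because $r_{(i,t)}^{[k]}(a\mid x)$ depends on $x$ only through $x(\mathbb V_t^i(k))$, by definition of $D_{(i,t)}^k(x)$.

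The genuinely new point, and the one I expect to be the main (though not deep) obstacle, is property (4): that $\mu_{(i,t)}(k)$ and $p_{(i,t)}^{[k]}(a\mid x)$ are $\F_t$-measurable. This follows from the observation that for each fixed $x$ the quantities $r_{(i,t)}^{[k]}(a\mid x)$ are obtained by taking infima/suprema of $\phi_i\bigl(\sum_j W_{j\to i}\int_{L_t^i(z)}^t g_j(t-s)\,dz_j(s)\bigr)$ over configurations $z$ that agree with $x$ on $\mathbb V_t^i(k)$ and are dominated by $dN$; since the environment $N$ restricted to $]-\infty,t]$ is $\F_t$-measurable and $g_j$ is deterministic, each such extremum is $\F_t$-measurable, hence so are $\alpha_{(i,t)}^{[k]}$, the differences $\mu_{(i,t)}(k)$, and the ratios defining $p_{(i,t)}^{[k]}$. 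Care is needed to check that the countable infimum over $z$ in $D_{(i,t)}^k(x)$ (a measurable selection issue) produces a measurable function of $N$; this is handled by noting that it suffices to take the infimum over $z$ whose atoms lie among the atoms of $N$, a countable family measurably parametrised by $\omega$. This establishes (4) and completes the proof.
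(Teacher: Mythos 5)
Your proposal follows the paper's argument essentially line by line: telescoping identity, vanishing of the remainder via the bound in \eqref{eq:ubrest} using the preceding lemma, then the same $\tilde\mu$/$\tilde p$ construction and interval-splitting as in Proposition~\ref{prop:1hc}. The paper's actual proof is even terser (it simply states the remainder is identical to that of Proposition~\ref{prop:1hc} and says nothing explicit about item~(4)); your observation that $\F_t$-measurability reduces to the infimum being taken over configurations whose atoms lie among those of $N$ on $]-\infty,t]$, a countable family measurably parametrised by the realization of $N$, is the correct and the only genuinely new point, and it fills a gap the paper leaves implicit.
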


\begin{proof}
As in the proof of Proposition \ref{prop:1hc}, we write
\begin{equation}\label{eq:dec3}
p_{(i, t)} ( a | x )  = r_{(i,t)}^{[0]} ( a) + 
\sum_{ k =1 }^N \Delta_{(i,t)}^{[k]}(a|x) + \left( p_{(i,t)} ( a | x ) - r_{(i,t)}^{[N]} ( a | x ) \right) ,
\end{equation}
where
$$
\Delta_{(i,t)}^{[k]}(a|x) := r_{(i,t)}^{[k]} ( a | x ) - r_{(i,t)}^{[k-1]} ( a | x ).
$$
The first step of the proof consists in proving that $\left( p_{(i,t)} ( a | x ) - r_{(i,t)}^{[N]} ( a | x ) \right)$ 
 tends to $0$ as N tends to $+ \infty.$ Using similar arguments as in Proposition \ref{prop:1hc}, we obtain
 \begin{multline*}
 \Big| p_{(i,t)} ( a | x ) - r_{(i,t)}^{[N]} ( a | x ) \Big| \\ \leq \gamma \left( \sum_{j \in V_i(N) } | W_{j\to  i } | \int_{ -\infty }^{t-N} | g_j (t-s) | \, d N_s^j  + \sum_{j \notin V_i(N) } | W_{j\to  i } | \int_{ -\infty }^t | g_j (t-s) | \, d N_s^j \right).
\end{multline*}
We obtain the same upper bound as in (\ref{eq:ubrest}) converging almost surely to $0$ for the same reasons. The remainder of the proof is exactly identical to the proof of Proposition \ref{prop:1hc}.
\end{proof}

\subsection{Perfect simulation}
The idea for the perfect simulation algorithm is the same as in Section \ref{subsec:pshc}, but here we use a decomposition in space and time. We work conditionally on the realization of the Poisson Random Measure $N$ and consider, for each $k \geq 1,$ the space time neighbourhood $\mathbb V_t^i(k).$ Define the clan of ancestors of element $(i,t)$ of size $k$ by 
$$
C^{(i,t)}(k):= \mathcal G \cap \mathbb V_t^i(k),$$ 
where ${\mathcal G} $ is the time grid $ \{(i,T_n^i),(i,n) \in I \times \Z \} $ and where by convention $C^{(i,t)}(0):= \emptyset.$
We choose as before i.i.d.\ random variables $K^{(i,t)}\in \N$ which are attached to each site $(i,t) \in \mathcal G ,$ chosen according to 
$$ \P( K^{(i,t) } = k ) = \mu_{(i,t)} ( k ) , \mbox{ for all } k \geq 0.$$
These random variables allow us to define the clans of ancestors  $ (C_n^{(i,t)})_n  \subset I \times ]- \infty , t[ $ as follows. We put $C_1^{(i,t)} := C^{(i,t)} \left( K^{(i,t)} \right)$ and 
$$
 C_n^{(i,t)} := \left( \bigcup_{ (j, s) \in C_{n-1}^{(i,t)}} C^{(j,s)}_1  \right) \setminus \left( C_1^{(i,t)} \cup \ldots \cup C_{n-1}^{(i,t)}\right).
$$
As before, we have to prove that the process $ | C_n^{(i,t)} | $ converges almost surely to $0$ as $n$ tends to $+\infty .$ For this sake, we compare the process $ | C_n^{(i,t)} | $ with a branching process of reproduction mean (depending on space and time)
$$ M^{(i, t)} :=  \sum_{ k \geq 1 } \left| C^{(i,t)} (k) \right| \mu_{(i,t)} ( k )  . $$
Clearly,
\begin{equation}\label{eq:214}
\mathbb E^N (  | C_n^{(i,t)} | | C_{n-1}^{ (i,t)} )  \le \sum_{ (j, s ) \in C_{n-1}^{(i,t)} } M^{ (j,s ) } .
\end{equation}
Now, the structure that we imposed on the neural network in (\ref{eq:structure}) implies that for neurons $i \in I_l$ and $j \in I_m$ the event $\{ (j ,s) \in C_n^{(i,t)} \} $ is empty if $l \neq m + n.$ Moreover, this event depends only of realizations for neurons in layers $I_p$ with $m \leq p < l$ whereas $  M^{(j,s) } $ depends on realizations for neurons in the layer $I_{m-1}.$  Consequently $1_{ (j ,s) \in C_n^{(i,t)} }$ is independent of $  M^{(j,s) } ,$
which allows us to write
\begin{equation}\label{eq:claim}
\mathbb E \left( \sum_{(j, s ) } 1_{ (j ,s) \in C_n^{(i,t)} }M^{(j,s)} \right) = \sum_{(j, s ) } \mathbb E ( 1_{ (j ,s) \in C_n^{(i,t)}}  ) \mathbb E ( M^{(j,s) }).
\end{equation}
The next step in the proof is to show that $$ \sup_{i \in I} \mathbb E^N ( M^{(i,t) }) < 1.$$
As in Section \ref{subsec:pshc}, we start with a proposition that gives an upper bound for the probabilities $\left( \mu_{(i,t)} (k) \right)_{k \geq 1}.$

\begin{prop}

\begin{equation}\label{eq:ubmu}
\mu_{(i,t)} (k) \leq \gamma \left( \sum_{j \in V_i(k-1)} \left| W_{ j \to i } \right| \int_{ t-k }^{t-k+1} g_j (t-s) dN_s^j + \sum_{j \in \partial V_i(k-1)} \left| W_{ j \to i } \right| \int_{ t-k }^t g_j (t-s) dN_s^j \right).
\end{equation}

\end{prop}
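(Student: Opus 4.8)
The plan is to follow closely the proof of the analogous bound (\ref{eq:ubmuhc}) in Section 4, the new ingredients being the leak functions $g_j$ and the fact that the Kalikow decomposition now takes place in space \emph{and} time, so that the ``corridor'' separating $\mathbb V_t^i(k)$ from $\mathbb V_t^i(k-1)$ is
$$R_k := \mathbb V_t^i(k)\setminus \mathbb V_t^i(k-1) = \bigl(\partial V_i(k-1)\times[t-k,t[\bigr)\ \cup\ \bigl(V_i(k-1)\times[t-k,t-k+1[\bigr),$$
which is exactly what produces the two sums on the right-hand side of (\ref{eq:ubmu}). First I would rewrite, for every configuration $x$, $r_{(i,t)}^{[k]}(1|x)+r_{(i,t)}^{[k]}(0|x) = 1 - \bigl(\sup_{z\in D_{(i,t)}^k(x)}\phi_i(F(z)) - \inf_{z\in D_{(i,t)}^k(x)}\phi_i(F(z))\bigr)$ with $F(z):=\sum_j W_{j\to i}\int_{L_t^i(z)}^t g_j(t-s)\,dz_j(s)$, so that $\mu_{(i,t)}(k) = \sup_x \mathrm{osc}_{D^{k-1}_{(i,t)}(x)}(\phi_i\circ F) - \sup_x \mathrm{osc}_{D^k_{(i,t)}(x)}(\phi_i\circ F)$, where $\mathrm{osc}$ denotes the oscillation. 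Using $\sup_x A(x)-\sup_x B(x)\le\sup_x\bigl(A(x)-B(x)\bigr)$, it then suffices to bound, uniformly in $\eta$, the quantity $\mathrm{osc}_{D^{k-1}_{(i,t)}(\eta)}(\phi_i\circ F) - \mathrm{osc}_{D^k_{(i,t)}(\eta)}(\phi_i\circ F)$ by the right-hand side of (\ref{eq:ubmu}).

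Since $\phi_i$ is non-decreasing and, being $\gamma$-Lipschitz, continuous, the suprema and infima commute with $\phi_i$; writing $\overline F_l$ and $\underline F_l$ for the supremum and the infimum of $F$ over $D^l_{(i,t)}(\eta)$ and using $D^k_{(i,t)}(\eta)\subset D^{k-1}_{(i,t)}(\eta)$, this difference equals $\bigl[\phi_i(\overline F_{k-1})-\phi_i(\overline F_k)\bigr]+\bigl[\phi_i(\underline F_k)-\phi_i(\underline F_{k-1})\bigr]\le\gamma\bigl[(\overline F_{k-1}-\overline F_k)+(\underline F_k-\underline F_{k-1})\bigr]$. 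To evaluate this bracket one argues in the worst case $L_t^i=-\infty$, exactly as in the derivation of (\ref{eq:ubrest}); this is harmless here because $W_{i\to i}=0$ (by (\ref{eq:structure}) neuron $i$ cannot be its own ancestor), so the position of the integration window never enters $F$ through a self-term, and a longer window can only enlarge the oscillations. With the window frozen, $F$ is affine in the $N$-masses of the free coordinates: the passage from $D^k_{(i,t)}(\eta)$ to $D^{k-1}_{(i,t)}(\eta)$ additionally frees exactly the coordinates lying in the corridor $R_k$, and $\overline F_{k-1}$ (resp. $\underline F_k$) is attained by switching those coordinates on where $W_{j\to i}>0$ and off where $W_{j\to i}<0$ (resp. the reverse), subject to the domination of $z$ by $N$. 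The contributions of the frozen $\eta$-coordinates to $\overline F_{k-1}-\overline F_k$ and to $\underline F_k-\underline F_{k-1}$ are then exactly opposite and cancel upon addition — the same telescoping as in Section 4 — leaving
\begin{multline*}
(\overline F_{k-1}-\overline F_k)+(\underline F_k-\underline F_{k-1}) = \sum_{j\in V_i(k-1)}|W_{j\to i}|\int_{t-k}^{t-k+1}g_j(t-s)\,dN_s^j \\ + \sum_{j\in\partial V_i(k-1)}|W_{j\to i}|\int_{t-k}^{t}g_j(t-s)\,dN_s^j,
\end{multline*}
and since this is independent of $\eta$, multiplying by $\gamma$ gives (\ref{eq:ubmu}).

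The step that I expect to demand the most care, and that I would write out in full, is the treatment of the variable integration window $[L_t^i(z),t[$: unlike in the threshold model $F$ is not a fixed linear functional of the configuration, and a configuration realizing an extremum may move the last spike of neuron $i$ and thereby change the window seen by \emph{all} other neurons. What makes the reduction to $L_t^i=-\infty$ legitimate is, on the one hand, $W_{i\to i}=0$, and on the other hand the monotonicity of $\phi_i$ together with $g_j\ge 0$ — precisely the combination already exploited to obtain (\ref{eq:ubrest}) in the proof of the Lemma above — so here I would simply refer to that argument. Everything else (the inequality $\sup_x A(x)-\sup_x B(x)\le\sup_x(A(x)-B(x))$, the commutation of $\phi_i$ with $\sup$ and $\inf$, and the telescoping cancellation of the $\eta$-dependent terms) is a routine transcription of the proof of (\ref{eq:ubmuhc}).
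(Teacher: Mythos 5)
Your proof is correct and follows essentially the same route as the paper's: both reduce to bounding, for a near-optimal fixed configuration, the difference of oscillations over $D^{k-1}_{(i,t)}$ and $D^k_{(i,t)}$ (your inequality $\sup_x A - \sup_x B \le \sup_x(A-B)$ is exactly the paper's $\varepsilon$-argument), both reduce to the case $L_t^i = -\infty$, and both evaluate the extrema of $F$ by switching the free coordinates to $1$ or $0$ according to the sign of $W_{j\to i}$ and then apply the Lipschitz bound, with the $\eta$-dependent terms on the corridor $R_k$ cancelling in the sum of the two brackets. The $\mathrm{osc}$-rewrite is a slightly cleaner packaging but not a different argument.

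One remark on the point you yourself flag as delicate. The reduction to $L_t^i = -\infty$ is indeed harmless, but the clean reason is slightly more specific than ``$W_{i\to i}=0$ plus monotonicity of $\phi_i$ and $g_j\ge 0$'': because $\mathbb V_t^i(k)=V_i(k)\times[t-k,t[$ and the constraint $z_i=u_i$ on $\{i\}\times[t-k,t[$ (with $u_i\equiv 0$ there) forces any candidate last spike $L_t^i(z)$ to satisfy $L_t^i(z)<t-k$, the coordinates removed by shortening the window lie entirely at times $<t-k$ and are therefore \emph{free} in $D^k_{(i,t)}(u)$. Hence enlarging the window can only add, to the extremising configuration, terms of the favourable sign, so $L_t^i=-\infty$ is optimal simultaneously for the sup and the inf. (That $W_{i\to i}=0$ lets you pick $z_i$ freely to realise any admissible $L$ without side-effects on $F$.) The paper is equally brief on this; your version is fine, but making explicit that the excluded coordinates are all free in the relevant $D^k$ is what actually closes the argument.
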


\begin{proof}
Using the definition of $\mu_{(i,t)}(k),$
$$
\mu_{(i,t)} (k) = \inf_{x \in {\mathcal S}} \left( r_{(i,t)}^{[k]} (1 | x )  +  r_{(i,t)}^{[k]} (0 | x ) \right) - \inf_{x \in {\mathcal S}} \left( r_{(i,t)}^{[k-1]} (1 | x )  +  r_{(i,t)}^{[k-1]} (0 | x ) \right) .
$$
Fix $\varepsilon > 0$ and let $u \in {\mathcal S}$ be such that 
$$
 r_{(i,t)}^{[k-1]} (1 | u )  +  r_{(i,t)}^{[k-1]} (0 | u ) \leq \inf_{x \in {\mathcal S}} \left( r_{(i,t)}^{[k-1]} (1 | x )  +  r_{(i,t)}^{[k-1]} (0 | x ) \right) + \varepsilon, 
$$ 
then
$$
\mu_{(i,t)} (k) \leq  \left( r_{(i,t)}^{[k]} (1 | u )  +  r_{(i,t)}^{[k]} (0 | u ) \right) -  \left( r_{(i,t)}^{[k-1]} (1 | u )  +  r_{(i,t)}^{[k-1]} (0 | u ) \right) + \varepsilon .
$$
Here we can assume, without loss of generality that $L_t^i (u) = - \infty .$ Indeed if $L_t^i (u) > - \infty,$ let $u'$ be such that $L_t^i (u')= - \infty, u' \left( \left] - \infty ; L_t^i (u) \right] \right) =0 $ and $u' \left( \left] L_t^i (u) ; t \right] \right) =u \left( \left] L_t^i (u) ; t \right] \right) ,$ then $u'$ and $u$ are two equivalent configurations in terms of acceptance/rejection decision of the site $(i,t).$ Then
\begin{multline*}
\mu_{(i,t)} (k) - \varepsilon \\ \leq \inf_{ z\in D_{(i,t)}^k(u)   } \phi_i \left( \sum_j W_{ j \to i } \int_{ L_t^i (z) }^t g_j (t-s) dz_j(s) \right) - \inf_{ z\in D_{(i,t)}^{k-1}(u)   } \phi_i \left( \sum_j W_{ j \to i } \int_{ L_t^i (z) }^t g_j (t-s) dz_j(s) \right) \\ +  \sup_{ z\in D_{(i,t)}^{k-1}(u)   } \phi_i \left( \sum_j W_{ j \to i } \int_{ L_t^i (z) }^t g_j (t-s) dz_j(s) \right) - \sup_{ z\in D_{(i,t)}^k(u)   } \phi_i \left( \sum_j W_{ j \to i } \int_{ L_t^i (z) }^t g_j (t-s) dz_j(s) \right) 
\end{multline*}
Now we will simplify this expression detailing the configurations that realize the extrema.  In order to reach a lower-bound, we have to fix $z$ such that $L_t^i (z)=- \infty$ (we can do this since $L_t^i (u) = - \infty$) and whenever we have the choice for $z$  we also have to fix $z=1$ if the corresponding $W_{j \to i}$ is negative, else we have to fix $z=0.$  We do the opposite choice in order to reach an upper-bound.

Using condition (\ref{eq:Lip}), we obtain
\begin{multline*}
\mu_{(i,t)} (k)- \varepsilon \\ 
\leq  \gamma \left[ \left( \sum_{j \in V_i(k-1)} \left| W_{ j \to i } \right| \int_{ - \infty }^{t-k+1} g_j (t-s) dN_s^j  + \sum_{j \notin V_i(k-1)} \left| W_{ j \to i } \right| \int_{ - \infty }^t g_j (t-s) dN_s^j \right) \right. \\ 
- \left. \left( \sum_{j \in V_i(k)} \left| W_{ j \to i } \right| \int_{ - \infty }^{t-k} g_j (t-s) dN_s^j + \sum_{j \notin V_i(k)} \left| W_{ j \to i } \right| \int_{ - \infty }^t g_j (t-s) dN_s^j \right) \right] ,
\end{multline*}
so that finally 
$$
\mu_{(i,t)} (k) \leq \gamma \left( \sum_{j \in V_i(k-1)} \left| W_{ j \to i } \right| \int_{ t-k }^{t-k+1} g_j (t-s) dN_s^j + \sum_{j \in \partial V_i(k-1)} \left| W_{ j \to i } \right| \int_{t-k }^t g_j (\bar t-s) dN_s^j \right)
$$
which is the announced upper-bound (\ref{eq:ubmu}).
\end{proof}

Now, we will use the stationarity of the PRM ; this will allow us to omit the  dependence in time (by fixing $t=0$) since we are only interested in the expectation.  We have the following first upper-bound for  $M^{(i,t) }$ thanks to the inequality (\ref{eq:ubmu})
\begin{equation}\label{eq:condexp}
\frac{M^{(i,t) }}{\gamma}  \leq \sum_{k \geq 1} \left| C_{(i,t)} (k) \right| \left( \sum_{j \in V_i(k-1)} \left| W_{ j \to i } \right| \int_{ k-1 }^{k} g_j (s) dN_s^j  \right. \\
\left. + \sum_{j \in \partial V_i(k-1)} \left| W_{ j \to i } \right| \int_{ 0 }^{k} g_j (s) dN_s^j \right) .
\end{equation}

The difficulty to calculate the expectation of this upper-bound is that $N_s^j$ is present in each term of the product so that these terms are not independent. However, we have independence whenever the indexes denoting the neurons are different or when the intervals of time that we consider are disjoints.  We will therefore decompose the sums in the previous expression in order to isolate the products of non-independent terms. Then we calculate separately the expectations of these terms. For example, we have:
$$
\mathbb E \left[ \left( \int_0^k dN_s^j \right)  \left( \int_0^k  g_j(s) dN_s^j \right) \right] = \Lambda_j \left( k \Lambda_j +1 \right) \left( \int_0^k g_j(s)ds \right) .
$$

This result allows us to decompose  the above expectation as the sum of the covariance and the product of expectations. We can make such a decomposition for each product of non independent terms in (\ref{eq:condexp}).
Consequently, the expectation of the upper bound in (\ref{eq:condexp}) can be written as the sum of the covariances of the non-independent terms and the products of expectations of all the terms. After factorization, we finally obtain:
\begin{multline*}
\frac{\mathbb E \left( M^{(i,t)}\right)}{\gamma} 
 \leq \sum_{k \geq 1} \left[ \left( k \left( \sum_{j \in  V_i(k)} \Lambda_j \right)+1 \right) \left(   
\sum_{j \in V_i(k-1)} | W_{j \to i} | \Lambda_j \int_{k-1}^{k} g_j(s) ds \right. \right. \\
\left. \left.  +  \sum_{j \in \partial V_i(k-1)} | W_{j \to i} | \Lambda_j \int_0^{k} g_j(s) ds  \right)  \right].
\end{multline*}
Now we can use the assumption (\ref{eq:horrible}) of Theorem 1 in order to deduce that $ M := \sup_i \mathbb E\left( M^{(i,m)}\right) < 1 .$ Consequently, we have
$$\mathbb E (| C_n^{(i,t)} |) \le M^n \to 0 \mbox{ when } n \to \infty . $$
This ensures us that the process $\left( C_n^{(i,t)} \right)_{n \in \N}$ goes extinct a.s., or in other words that $N^{Stop} < \infty$ a.s. Consequently the perfect simulation algorithm ends in a finite time and this achieves the proof of the construction of the process.
\begin{flushright}
$\square$
\end{flushright}

\section*{Acknowledgments}
We thank A. Galves and P.A. Ferrari for many nice discussions on the subject.  P.H. thanks G. Lang very warmly for careful reading of the manuscript and valuable comments.

This research has been conducted as part of the project Labex MME-DII (ANR11-LBX-0023-01) and as part of the activities of FAPESP Research,
Dissemination and Innovation Center for Neuromathematics (grant
2013/07699-0, S.\ Paulo Research Foundation).


\begin{thebibliography}{99}

\bibitem{bm}
{\sc Br\'emaud, P., Massouli\'e, L.}
\newblock Stability of nonlinear Hawkes processes. 
\newblock {\em The Annals of Probability}, 24, No 3,1563-1588, 1996.

\bibitem{cff}
{\sc Comets, F., {Fern\'andez}, R., Ferrari, P.\ A. }
\newblock Processes with long memory: Regenerative construction and perfect
  simulation.
\newblock {\em Ann. of Appl. Probab.}, 12(3):921--943, 2002.

\bibitem{dvj}
{\sc Daley,D.J. , Vere-Jones, D.}
\newblock An introduction to the theory of point processes, Vol. I. Probability and its Applications (New York).
\newblock {\em Springer-Verlag}, 2003 ISBN 978-0-387-21564-8, New York

\bibitem{dfh}
{\sc Delattre, S., Fournier, N., Hoffmann, M.}
\newblock High dimensional Hawkes processes.
\newblock To appear in {\em Ann. App. Probab.} see also http://arxiv.org/abs/1403.5764

\bibitem{fmmn}
{\sc Ferrari, P.~A., Maass, A., Mart\'{\i}nez, S., Ney, P.}
\newblock {Ces\`aro mean distribution of group automata starting from measures
  with summable decay.}
\newblock {\em Ergodic Theory Dyn. Syst.}, 20(6):1657--1670, 2000.

\bibitem{ae}
{\sc Galves, A., L\"ocherbach, E.}
\newblock Infinite systems of interacting chains with memory of variable length--a stochastic model 
for biological neural nets.
\newblock {\em J. Stat. Phys. 151}, 5 (2013), 896--921.

\bibitem{hrbr}
{\sc Hansen, N., Reynaud-Bouret, P., Rivoirard, V.}
\newblock Lasso and probabilistic inequalities for multivariate point processes.
\newblock {\em Bernoulli} 21(1), 83-143, 2015.

\bibitem{Hawkes}
{\sc Hawkes, A. G.}
\newblock (1971) Spectra of Some Self-Exciting and Mutually Exciting Point Processes. 
\newblock {\em Biometrika 58}, 83-90

\bibitem{ho}
{\sc Hawkes, A. G. and Oakes, D.}
\newblock  (1974) A cluster process representation of a self-exciting
process.  
\newblock {\em J. Appl. Prob.}, 11, 93-503. 

\bibitem{jr}
{\sc Jaisson, T. and Rosenbaum, M.}
\newblock  Limit theorems for nearly unstable Hawkes processes.
\newblock {\em Ann. Appl. Prob.}, 25(2), 600-631, 2015. 

\bibitem{lig}
{\sc Liggett, Thomas M.}
\newblock Interacting particle systems.
\newblock {\em Springer}, 1985 ISBN 3-540-96069-4, Berlin

\bibitem{mas}
{\sc Massouli\'e, Laurent}
\newblock Stability results for a general class of interacting point processes, and applications.
\newblock {\em Stochastic processes and their applications}, 75 (1998) 1-30

\bibitem{mr}
{\sc M\o ller, J., Rasmussen, J.G.}
\newblock Perfect simulation of Hawkes processes.
\newblock {\em Adv. Appl. Probab.}, 37, 629-646, 2005. 

\bibitem{rbs}
{\sc Reynaud-Bouret, P., Schbath, S.}
\newblock Adaptive estimation for Hawkes processes; application to genome analysis.
\newblock {\em Ann. Statist.}, 38(5), 2781-2822, 2010.

\end{thebibliography}
\end{document}